\newcommand{\K}{\mathbb{K}}
\newcommand{\B}{\mathbb{B}}
\newcommand{\Img}[1]{\mathrm{Im}({#1})}
\newcommand{\Ker}[1]{\mathrm{Ker}({#1})}
\newcommand{\ot}{\otimes}
\DeclareMathOperator{\HH}{HH}
\DeclareMathOperator{\HHom}{Hom}
\numberwithin{equation}{section}
\theoremstyle{break}
\newtheorem{rema}[equation]{Remark}
\newtheorem{theo}[equation]{Theorem}
\newtheorem{prop}[equation]{Proposition}
\newtheorem*{coro*}{Corollary}
\title{CUP PRODUCT ON HOCHSCHILD COHOMOLOGY OF A FAMILY OF QUIVER ALGEBRAS}
\author{Tolulope Oke\\}
\date{October 29, 2019}
\begin{document}

\maketitle
\thispagestyle{empty}
%\clearpage

\begin{abstract}
Let $k$ be a field, $q\in k$. We derive a cup product formula on the Hochschild cohomology $\text{HH}^*(\Lambda_q)$ of a family $\Lambda_q$ of quiver algebras. Using this formula, we determine a subalgebra of $k[x,y]$ isomorphic to $\text{HH}^*(\Lambda_q)/\mathcal{N}$, where $\mathcal{N}$ is the ideal generated by homogeneous nilpotent elements. We explicitly construct non-nilpotent Hochschild cocycles which cannot be generated by lower homological degree elements, thus disproving the Snashall-Solberg finite generation conjecture.\\
\\
\end{abstract}

\section{Introduction}\label{intro}%%%%%%%%%%%%%%%%%%%INTRODUCTION
The theory of support varieties has been well developed for finite groups using group cohomology. Several efforts were made to develop similar theories for finitely generated modules over finite dimensional algebras using Hochschild cohomology. Hochschild cohomology $\HH^*(\Lambda)$ is graded commutative. If the characteristic  $char(k)\neq 2$, then every homogeneous element of odd degree is nilpotent. Let $\mathcal{N}$ be the set of nilpotent elements of $\HH^*(\Lambda)$, Hochschild cohomology modulo nilpotents 
$\HH^*(\Lambda)/\mathcal{N}$ is therefore a commutative $k$-algebra. For some finite dimensional algebras, it is well known that the Hochschild cohomology ring modulo nilpotents is finitely generated
as an algebra. N. Snashall described many classes of such algebras in section 3 of ~\cite{SVHC}. Before the expository paper ~\cite{SVHC}, it was conjectured in \cite{SVHCR} that Hocschild cohomology modulo nilpotents is always finitely generated
as an algebra for finite dimensional algebras. The first counterexample to this conjecture appeared in ~\cite{XU} where F. Xu used certain techniques in category theory to construct a seven-dimensional category algebra whose Hochschild cohomology ring modulo nilpotents is not finitely generated. There have been since then several constructions of different counter examples to this conjecture\cite{MCSS}. While it is of great use to produce a counterexample, it is equally of importance to discuss the techniques used to produce such examples. Snashall gave a different presentation of the Xu counterexample which we will summarize briefly.

A quiver is a directed graph where loops and multiple arrows (also called
paths) between two vertices are possible. For a field $k$, the path algebra $kQ$, is the $k$-vector space generated by all paths in the quiver $Q$. A vertex is a path of length $0$. By taking multiplication of two paths $x$ and $y$ to be the concatenation $xy$ if the terminal vertex $t(x)$ of $x$ and the origin vertex $o(y)$ of $y$ are equal, and otherwise $0$, $kQ$ becomes an associative ring. Let $I$ be an ideal of $kQ$. The quotient $ \Lambda = kQ/I$ is called a quiver algebra. 

%%%%%%%%%%%%%%%%%%%%%%
%%%%%%%%%%%%%%%%%%%%%%
%%%%%%%%%%%%%%%%%%%%%%%%
%%%%%%%%%%%%%%%%%%%%%%%%%

Let $Q$ be the quiver;
\begin{tikzcd}
1 \arrow[out=190,in=270,loop,swap,"b"]
  \arrow[out=90,in=170,loop,swap,"a"]
  \arrow[r,"c"] & 2
\end{tikzcd}\ \\
and let
\begin{equation}\label{quivalg}
\Lambda = \Lambda_q = \frac{kQ}{I}, \qquad I = \langle a^2,b^2, ab-qba, ac \rangle, q\in k
\end{equation}
be a family of quiver algebras. We note the following about $\Lambda_q$ for each $q$.\\
\textbf{Remarks}
\begin{itemize}
\item $\Lambda$ is finitely generated since $Q$ is a finite quiver with finite vertices and arrows. 
 \item $\Lambda$ is a Koszul graded quiver algebra.
\item Let $\Lambda = \oplus_{i\geq 0} \Lambda_i$ be a grading for $\Lambda$, the Koszul dual $\Lambda^!$ of $\Lambda$ is connected to the Yoneda algebra of $\Lambda$ by the following;
\begin{equation}
E(\Lambda) = Ext^*_{\Lambda}(\Lambda_0,\Lambda_0) = \Lambda^! \cong kQ^{opp}/I^{\perp}
\end{equation}
where $Q^{opp}$ is the quiver with opposite arrow,  $I^{\perp} = \langle  a^ob^0+q^{-1}b^0a^0, b^0c^0 \rangle$ and any $v\in KQ$, $v^0$ is
 the correponding arrow in opposite quiver algebra $kQ^{opp}$. Note also that $\Lambda^!$ is generated in degrees 0 and 1.
\item The case where $q=\pm 1$, $I$ belongs to a class of (anti-)commutative ideals studied by Gawell and Xantcha. There is an associated generator graph (of the orthogonal ideal $I^{\perp}$ of $I$) which has no directed cycles. This means that the ideal $I$ is admissible~\cite{GX}.
\end{itemize}

For the case $q=1$ of ~\eqref{quivalg}, the graded center $Z_{gr}(E(\Lambda_q))$ is given by the following 
 $$Z_{gr}(E(\Lambda_1)) = \begin{cases} k\oplus k[a,b]b, &\text{ if } char(k) = 2 \\
k\oplus k[a^2,b^2]b^2, &\text{ if } char(k)\neq 2 \end{cases}$$
where the degree of $b$ is 1, and that of $ab$ is 2.

We now present a theorem of Snashall\rq{}s with respect to the finite generation conjecture.
\begin{theo}\label{snalsol}
Let $k$ be a field and $\Lambda_1$ be a member of the class of quiver algebras given in ~\eqref{quivalg}, and $\mathcal{N}$ be the set of nilpotent elements of $\HH^*(\Lambda_1),$ then
$$\HH^*(\Lambda_1)/\mathcal{N}\cong Z_{gr}(E(\Lambda_1)) = \begin{cases} k\oplus k[a,b]b, &\text{ if } char(k) = 2 \\
k\oplus k[a^2,b^2]b^2, &\text{ if } char(k)\neq 2 \end{cases}$$
where the degree of $b$ is 1, and that of $ab$ is 2.
\end{theo}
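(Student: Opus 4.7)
The plan is to invoke the Snashall--Solberg (equivalently Buchweitz--Green--Madsen--Solberg) comparison theorem for Koszul algebras. For any Koszul algebra $\Lambda$ there is a natural graded ring homomorphism $\varphi : \HH^*(\Lambda) \to E(\Lambda)$, landing inside the graded center $Z_{gr}(E(\Lambda))$, and the induced map
$$\overline{\varphi}\colon \HH^*(\Lambda)/\mathcal{N} \;\longrightarrow\; Z_{gr}(E(\Lambda))/\mathcal{N}'$$
is an isomorphism, where $\mathcal{N}'$ denotes the homogeneous nilpotents in the graded center. Since the remarks above already record that $\Lambda_1$ is Koszul, this theorem applies in our setting. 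The strategy is then to apply $\overline{\varphi}$ to the pre-computed graded center displayed just before the theorem statement, and to verify that $\mathcal{N}' = 0$ so that the quotient is the graded center itself.

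To see that $\mathcal{N}' = 0$, I would examine each of the two cases. In characteristic $2$, the relation $a^ob^o + b^oa^o = 0$ in $E(\Lambda_1)$ becomes $a^ob^o = b^oa^o$, so $k[a^o, b^o]$ is a genuine polynomial ring and its ideal $k[a,b]\,b$ is an integral domain, hence reduced. In characteristic $\neq 2$, the elements $(a^o)^2$ and $(b^o)^2$ are even-degree and graded-commute with everything; the subalgebra they generate is a commutative polynomial ring, and its ideal $k[a^2, b^2]\,b^2$ is again a domain, hence reduced. Therefore $\mathcal{N}' = 0$ in both cases, and the Snashall--Solberg isomorphism reduces to
$$\HH^*(\Lambda_1)/\mathcal{N} \;\cong\; Z_{gr}(E(\Lambda_1)),$$
matching the stated formula once the degree conventions $|b|=1$ and $|ab|=2$ are read off from the homological grading on the Yoneda algebra (degree $=$ length of the dual path).

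The main obstacle is the application of the comparison theorem itself: one must construct $\varphi$ explicitly, typically via the Koszul bimodule resolution of $\Lambda_1$ available because $\Lambda_1$ is Koszul, verify that $\varphi$ is a graded ring homomorphism whose image lies in $Z_{gr}(E(\Lambda_1))$, and show that $\ker(\varphi) \subseteq \mathcal{N}$ with image containing the graded center modulo nilpotents. Granting this machinery, the rest of the argument is the direct observation that the graded center as presented is already reduced, so passing to the quotient by $\mathcal{N}'$ changes nothing and the identification with $\HH^*(\Lambda_1)/\mathcal{N}$ is immediate.
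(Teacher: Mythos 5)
Your proposal is correct in outline, but note first that the paper never actually proves Theorem~\ref{snalsol}: it is quoted as Snashall's theorem, serving as background. The route you take --- the homomorphism $\phi_M$ of equation~\eqref{phim} landing in the graded center, the Koszul-algebra result that its image is all of $Z_{gr}(E(\Lambda))$ for $M\cong\Lambda/\mathfrak{r}$ (cited in the paper as \cite{MSKA}), plus the observation that the displayed graded center is reduced so that $\mathcal{N}'=0$ --- is essentially a reconstruction of the original literature proof rather than anything in this paper. Two small corrections to your citations and gaps: the ``image equals graded center'' theorem is due to Buchweitz--Green--Snashall--Solberg \cite{MSKA}, not Buchweitz--Green--Madsen--Solberg \cite{FHFG}; and the step you defer to ``the machinery,'' namely $\ker(\phi_M)\subseteq\mathcal{N}$, is the real crux --- it holds because on the minimal bimodule resolution a class killed by $\phi_{\Lambda_0}$ is represented by a cocycle with values in $\mathfrak{r}$, cup powers then take values in powers of $\mathfrak{r}$, and $\mathfrak{r}$ is nilpotent. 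Your reducedness argument is sound (both displayed algebras are subalgebras of the domain $k[a,b]$, so the quotient by $\mathcal{N}'$ is vacuous), though strictly an ideal like $k[a,b]b$ is not itself a domain, only nilpotent-free. By contrast, the paper's own machinery, developed for its Theorem~\ref{tolwit}, deliberately avoids $\phi_M$ and the graded center: it computes $\ker d^*_{n+1}$ explicitly on the minimal resolution $\K_\bullet$ (Proposition~\ref{prop}), derives a closed cup-product formula from the comultiplicative structure (Proposition~\ref{prop0}), and isolates the non-nilpotent cocycles by direct multiplication (Propositions~\ref{prop1} and~\ref{prop2}). Your approach buys uniformity --- it covers both characteristics at once, including $char(k)=2$, which Theorem~\ref{tolwit} excludes --- at the cost of heavy cited machinery; the paper's computational approach buys explicit cocycle representatives of the generators $x^{2n-r}y^r$ and the observation that they cannot be produced from lower homological degrees, which is the substance of the counterexample and is invisible from the graded-center side.
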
\ \\
\textbf{Our Result:} In this paper, we study the Hochschild cohomology ring of the family $\Lambda_q$ of quiver algebras of equation~\eqref{quivalg}. We give a formula for the dimension of the space of Hochschild cocycles $\Ker{d^*}$ where $d^*: \HHom_{\Lambda^e}(\K_*,\Lambda) \rightarrow \HHom_{\Lambda^e}(\K_*,\Lambda) $. We show that this number increases as the homological dimension grows. However, with a generalized cup product formula, we show that many of these cocycles are nilpotent with respect to the cup product and all cocycles of odd homological degrees are nilpotent.

While some authors have presented the result of Theorem ~\ref{snalsol} using the graded center of the Yoneda algebra $E(\Lambda)$, we give the same result using a different approach. We present our result in Theorem~\ref{tolwit} without using the isomorphism $\HH^*(\Lambda_1)/\mathcal{N}\cong Z_{gr}(E(\Lambda_1))$ of Theorem~\eqref{snalsol} ( or the  map $\phi_M$ of equation~\eqref{phim} defined in the next section). This approach involves a direct computation and the use of the generalized cup product formula on elements of Hochschild cohomology in Propostion~\eqref{prop0}. Furthermore, we give explicit presentation of the elements $a^{2n-r}b^r$ in $k[a,b]$ of Theorem~\ref{snalsol} as cocycles in Hochschild cohomology. We noted that these elements cannot be generated from other elements of lower homological degrees. Our main results are the following;
\begin{prop}%\label{prop0}
Let $\phi: \K_m\rightarrow \Lambda$, and $\mu: \K_n\rightarrow \Lambda$, be two Hochschild cocycles, and $\{\varepsilon^m_r\}_{r=0}^{t_m}$  are basis elements of $\K_m$ such that for each $0 \leq r\leq t_m$, $\phi(\varepsilon^m_r)=\phi^m_r$. Then the following gives
a formula for the cup product on Hochschild cohomology.\\
$(\phi\smallsmile\mu)(\varepsilon^{m+n}_k) = (\phi\mu)^{m+n}_k = \begin{cases} (-1)^{mn}\phi^m_0\mu^n_0, & when\; k=0\\
(-1)^{mn} T^{m+n}_k & when\; 0<k<m+n\\
 (-1)^{mn}\phi^m_m\mu^n_n, & when\; k=m+n\\
 (-1)^{mn}\phi^m_0\mu^n_{n+1}, & when\;k=m+n+1
\end{cases}$
$$T^{m+n}_k = \sum_{j=max\{0,k-n\}}^{min\{m,k\}} (-q)^{j(n-k+j)}\phi^m_j\mu^n_{k-j}, \quad\qquad 0<k<m+n.$$
\end{prop}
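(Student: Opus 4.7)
The plan is to compute $\phi\smallsmile\mu$ by means of a chain-level lift on the explicit bimodule resolution $\K_\bullet$. Given a cocycle $\phi:\K_m\to\Lambda$, one chooses a chain map $\widetilde{\phi}_\bullet:\K_{\bullet+m}\to\K_\bullet$ lifting $\phi$ (so that $\phi$ factors as the augmentation composed with $\widetilde{\phi}_0$), and then $(\phi\smallsmile\mu)(\xi)=\mu\bigl(\widetilde{\phi}_n(\xi)\bigr)$ for $\xi\in\K_{m+n}$, up to the Koszul sign $(-1)^{mn}$. I would apply this recipe to the minimal resolution with basis $\{\varepsilon^m_r\}_{r=0}^{t_m}$ constructed earlier in the paper. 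The appearance of a $k=m+n+1$ case in the proposition indicates $t_m=m+1$; the generators $\varepsilon^m_0,\ldots,\varepsilon^m_m$ correspond to the monomial slots $a^{m-r}b^r$ coming from the Koszul resolution of $k\langle a,b\rangle/(a^2,b^2,ab-qba)$, while the extra generator $\varepsilon^m_{m+1}$ carries the contribution of the relation $ac$.

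The heart of the argument is to write down the lift explicitly by the ansatz
\[
\widetilde{\phi}_n\bigl(\varepsilon^{m+n}_k\bigr)=\sum_{j=\max(0,k-n)}^{\min(m,k)} (-q)^{j(n-k+j)}\,\phi^m_j\,\varepsilon^n_{k-j}
\]
for $0<k<m+n$, together with the degenerate expressions $\widetilde{\phi}_n(\varepsilon^{m+n}_0)=\phi^m_0\,\varepsilon^n_0$, $\widetilde{\phi}_n(\varepsilon^{m+n}_{m+n})=\phi^m_m\,\varepsilon^n_n$, and $\widetilde{\phi}_n(\varepsilon^{m+n}_{m+n+1})=\phi^m_0\,\varepsilon^n_{n+1}$ at the boundaries (where the support of the extra $c$-generator forces the sum to collapse to a single term). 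Applying $\mu$ to these expressions and inserting the prefactor $(-1)^{mn}$ recovers all four cases of the stated formula. The combinatorial weights $(-q)^{j(n-k+j)}$ admit a clean interpretation: thinking of $\varepsilon^m_j$ and $\varepsilon^n_{k-j}$ as representing $a^{m-j}b^j$ and $a^{n-k+j}b^{k-j}$ respectively, the concatenation into $a^{m+n-k}b^k$ requires commuting $a^{n-k+j}$ past $b^j$, which accumulates $q^{j(n-k+j)}$ via $ab=qba$, while the extra sign is forced by the Koszul sign convention built into the differentials on $\K_\bullet$.

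The main obstacle is verifying that the above ansatz truly defines a chain map, i.e., that $d\widetilde{\phi}_{n+1}(\varepsilon^{m+n+1}_k)=\widetilde{\phi}_n\bigl(d\varepsilon^{m+n+1}_k\bigr)$ holds for every $k$. Expanding both sides using the explicit differentials on $\K_\bullet$ together with the cocycle condition $d^\ast\phi=0$, one reduces the check to a $q$-Pascal-type identity relating $(-q)^{j(n-k+j)}$ and $(-q)^{j(n-k+j-1)}$ after a unit shift of the index $k$, which is verified by direct bookkeeping of $q$-swaps. The boundary indices $k=0,\,m+n,\,m+n+1$ require separate verification, since the extra generator associated with the arrow $c$ enters the differential nontrivially and can make the support of the lift collapse or expand in ways not captured by the generic ansatz; this is also the source of the asymmetric-looking $k=m+n+1$ line in the statement, which must be checked by hand.
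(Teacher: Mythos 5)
Your proposal is correct, and it takes a genuinely different route from the paper. The paper obtains the formula directly from the comultiplicative structure on $\K$: it computes $\Delta_{\K}(\varepsilon^{m+n}_k)$ by pushing the basis elements into the bar resolution through $\iota$, quoting from \cite{FHFG} the decomposition $f^{m+n}_k=\sum_{j}(-q)^{j(n-k+j)}f^m_j\ot f^n_{k-j}$ (their identity with $t=m$), and then evaluating $\phi\smallsmile\mu=\pi(\phi\ot\mu)\Delta_{\K}$; the extremal cases $k=0$, $k=m+n$, $k=m+n+1$ are handled by writing $\Delta_{\B}\iota(\varepsilon^{m+n}_k)$ explicitly, which is where your ``collapsed'' boundary expressions come from. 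You instead build a degree $-m$ chain map $\widetilde{\phi}_{\bullet}$ lifting $\phi$ and compute the product as the Yoneda composition $\mu\circ\widetilde{\phi}_n$, verifying the chain-map identity by hand. The two routes are reconcilable because your ansatz is precisely $(\phi\ot 1)\Delta_{\K}$ in disguise, so $\mu\circ\widetilde{\phi}_n=\pi(\phi\ot\mu)\Delta_{\K}$ on the nose; I checked representative cases and they do work, e.g.\ for $k=m+n+1$ the required commutation reduces exactly to the cocycle condition $a\phi^m_0+(-1)^{m+1}\phi^m_0a=0$ at position $0$, confirming your observation that $d^{*}\phi=0$ must feed into the verification. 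Two points to mind as you carry it out: first, your lift satisfies $d\widetilde{\phi}=(-1)^{m}\widetilde{\phi}d$ rather than strict commutation (for $m=n=1$, $k=1$ one finds $d_1\widetilde{\phi}_1(\varepsilon^2_1)=-\widetilde{\phi}_0 d_2(\varepsilon^2_1)$, the discrepancy being $\phi(d_2\varepsilon^2_1)\ot e_1=0$), so the Koszul signs must be tracked consistently to land on the paper's $(-1)^{mn}$ normalization; second, a lift is unique only up to homotopy, so a priori $\mu\circ\widetilde{\phi}_n$ represents the product class only modulo coboundaries --- the chain-level identity asserted in the proposition follows once you note that your $\widetilde{\phi}$ coincides with the comultiplication-induced lift, which is in effect what your $q$-Pascal verification establishes. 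As for what each approach buys: the paper's is shorter because the combinatorial identity is imported from \cite{FHFG} and the bar-resolution diagonal does all the bookkeeping, while yours is self-contained, never leaves the minimal resolution, and makes visible why the weights $(-q)^{j(n-k+j)}$ arise (counting swaps of $a$ past $b$ under $ab=qba$), at the cost of carrying out the $q$-Pascal induction and the three boundary checks by hand.
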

\begin{theo}
Let $k$ ($char(k)\neq 2$) be a field and $\Lambda_q = \frac{kQ}{I}$ be the family of quiver algebras of ~\eqref{quivalg}, and $\mathcal{N}$ the set of nilpotent elements of $\HH^*(\Lambda_q),$  then
$$\HH^*(\Lambda_q)/\mathcal{N} = \begin{cases} \HH^0(\Lambda_q)/\mathcal{N} \cong Z(\Lambda_q) ^*\cong k,  &\text{ if } q\neq \pm 1 \\
Z(\Lambda_q)^* \oplus k[x^2,y^2]y^2 \cong k \oplus k[x^2,y^2]y^2 , &\text{ if } q =\pm 1 \end{cases}$$
where the degree of $y$ is 1, and that of $xy$ is 2.
\end{theo}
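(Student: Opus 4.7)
The plan is to compute $\HH^*(\Lambda_q)$ explicitly using a minimal projective bimodule resolution $\K_*$ of $\Lambda_q$ together with the cup product formula from Proposition \eqref{prop0}, and then to separate nilpotent from non-nilpotent cocycles. Since $\Lambda_q$ is Koszul (as noted in the remarks), the ranks of the terms $\K_n$ are governed by $\dim(\Lambda_q^!)_n$. I would index basis elements $\varepsilon^n_r$ of $\K_n$ so that they correspond via Koszul duality to the $q$-monomials $a^{n-r}b^r$, plus extra generators at vertex 2 coming from the relation $ac$. Each cochain is recorded by its values $\phi^n_r = \phi(\varepsilon^n_r) \in \Lambda_q$; computing the coboundary explicitly produces a basis of $\Ker{d^n}$ in each degree.

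Next I would feed the monomial cocycles into Proposition \eqref{prop0}. For $\phi^m_j = a^{m-j}b^j$ and $\mu^n_{k-j} = a^{n-k+j}b^{k-j}$, commuting $b^j$ past $a^{n-k+j}$ inside $\Lambda_q$ via $ba = q^{-1}ab$ introduces a factor $q^{-j(n-k+j)}$ that pairs with the coefficient $(-q)^{j(n-k+j)}$ appearing in $T^{m+n}_k$ to yield $(-1)^{j(n-k+j)}$. Applying $a^2 = b^2 = 0$, the sum collapses to a $q$-binomial coefficient times $a^{m+n-k}b^k$. When $q = \pm 1$ this $q$-binomial specializes to an ordinary signed binomial and contributes nontrivially modulo $\mathcal{N}$; when $q \neq \pm 1$ the combinatorial prefactors force all such products into the nilpotent ideal.

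Then I would separate out the non-nilpotent generators. Graded commutativity together with $\mathrm{char}(k) \neq 2$ makes every odd-degree cocycle nilpotent. In even degree the computation above isolates the cocycles $\chi_{2n,r}\colon \varepsilon^{2n}_r \mapsto a^{2n-r}b^r$ as the only candidates for non-nilpotent generators, and these close under cup product into a subalgebra. For $q = \pm 1$, matching homological degrees against the grading $\deg y = 1$, $\deg xy = 2$, one identifies this subalgebra with $k[x^2, y^2] y^2$, recovering and extending Snashall's result at $q = 1$. For $q \neq \pm 1$ the corresponding self cup products already lie in $\mathcal{N}$, so only $\HH^0(\Lambda_q)/\mathcal{N} \cong Z(\Lambda_q)^* \cong k$ survives.

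The hard part will be the combinatorial calculation that distinguishes $q = \pm 1$ from generic $q$: one must evaluate the $q$-binomial sums coming from the cup product formula modulo the ideal $\langle a^2, b^2, ab - qba \rangle$ and verify that they produce nonzero non-nilpotent elements \emph{exactly} when $q = \pm 1$. A secondary technical issue is handling the cocycles at vertex 2 attached to the relation $ac$: these must be shown to be nilpotent, which follows because such cocycles factor through $c$ and $c$ terminates at vertex 2, which has no outgoing arrows, so their cup product squares vanish.
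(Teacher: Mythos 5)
There is a genuine gap, and it sits at the heart of your plan: you misidentify the non-nilpotent cocycles. A cochain $\phi \in \HHom_{\Lambda^e}(\K_{2n},\Lambda_q)$ takes values in $\Lambda_q$ itself, which is the seven-dimensional algebra $span_k\{e_1,e_2,a,b,c,ab,bc\}$ with $a^2=b^2=0$ and $\mathfrak{r}^3=0$. Hence your proposed generators $\chi_{2n,r}\colon \varepsilon^{2n}_r\mapsto a^{2n-r}b^r$ are the zero map as soon as $2n-r\geq 2$ or $r\geq 2$, i.e.\ for every $n\geq 2$, and the $q$-binomial evaluation you flag as the hard part (commuting $b^j$ past $a^{n-k+j}$ and collapsing $T^{m+n}_k$ to a $q$-binomial times $a^{m+n-k}b^k$) is a computation with products that are identically zero in $\Lambda_q$. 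The monomials $a^{2n-r}b^r$ of Snashall's Theorem~\ref{snalsol} live in the Yoneda algebra $E(\Lambda)\cong\Lambda^!$, where $a,b$ are \emph{not} nilpotent; the point of the paper's proof is that the corresponding Hochschild cocycles take the value $e_1$, the idempotent at vertex $1$: the non-nilpotent classes are exactly $\phi=(0,\dots,0,e_1,0,\dots,0)$ with $e_1$ in an even position $r$ of an even homological degree $2n$ (Proposition~\ref{prop1}), and the monomial $x^{2n-r}y^r$ is merely the image of this cocycle under the isomorphism onto $k[x^2,y^2]y^2$ in Proposition~\ref{prop2} --- it records the \emph{position} $r$, not the value of the map.

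This misidentification also puts the dichotomy between $q=\pm1$ and generic $q$ in the wrong place. In the paper it is decided by the cocycle condition itself (Proposition~\ref{prop}): for $\phi^n_r=e_1$ to be a cocycle component one needs $a+(-1)^{n+1-r}q^r a=0$ and $(-q)^{n-r}b+(-1)^{n+1}b=0$, which fail for $q\neq\pm1$; so for generic $q$ every component of every positive-degree cocycle lies in $span_k\{a,b,ab,c,bc\}$, and such cocycles are nilpotent because any \emph{triple} product of elements of this set vanishes in $\Lambda_q$ (Remark~\ref{bigrem1} --- note a squaring argument alone does not suffice, since $a\cdot b=ab\neq 0$). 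There is nothing left for ``combinatorial prefactors'' in the cup product to kill: for $q\neq\pm1$ no non-nilpotent candidate exists in positive degree at all. Your proposal is correct that one computes kernels on the paper's resolution, that $\HH^0(\Lambda_q)/\mathcal{N}\cong k$, that the vertex-$2$ components $\phi^n_{n+1}\in span_k\{c,bc\}$ are harmless (their products meet $ac=abc=0$), and that odd-degree classes are nilpotent. But to repair the argument you must replace the monomial-valued cochains by the $e_1$-valued ones, verify (as the paper does just before Proposition~\ref{prop2}) that cocycles with $e_1$ in distinct positions are not cohomologous, and then check via Proposition~\ref{prop0} that the $e_1$-position is additive under cup product; it is that bookkeeping, not a $q$-binomial identity, which yields $k\oplus k[x^2,y^2]y^2$.
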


\section{Preliminary}\label{prelim}%%%%%%%%%%%%%%%%PRELIMINARY
A $k$-algebra $\Lambda$, is a graded quiver algebra if and only if there exists a finite quiver $Q$ and a homogeneous admissible ideal $I\subseteq kQ$ for which $\Lambda \cong kQ/I ~\cite{MV}.$ Let $(kQ)_n$ be the vector subspace of $kQ$ containing paths and linear combination of paths of length of $n$. Let $I\subset (kQ)_2$, then $\Lambda \cong kQ/I$ is a quadratic quiver algebra.
We define the quadratic dual $\Lambda^!$ of $\Lambda$ to be $\Lambda^! \cong kQ^{opp}/ I^{\perp},$
where $Q^{opp}$ is the quiver $Q$ with opposite arrows. Since $kQ^{opp}$ is also a $k$-vector space, it has a dual basis with respect to $kQ$. We define for each basis element $v_i\in (kQ^{opp})_2,$ a corresponding $x_i\in (kQ)_2$  such that  the bilinear form 
$\langle v_i,x_j\rangle = \begin{cases} 1, &\text{if }i=j \\ 0 & \text{ if } i\neq j. \end{cases}$ 
$$\text{Let} \quad  I^{\perp} = \{ v_i\in(kQ^{opp})_2 | \langle v_i,x_j\rangle = 0,\; \forall\; x_j\in I\subset (kQ)_2 \}.$$

If $\Lambda$ is a graded quiver algebra, then $\Lambda\cong \bigoplus_{i=0} \Lambda_i$. We denote by $\mathfrak{r} = \bigoplus_{i>0} \Lambda_i,$ the Jacobson radical of $\Lambda$ and 
$\Lambda_0 = \Lambda/\mathfrak{r}$ is isomorphic to a direct sum of a finite number of copies of $k$. The Yoneda algebra $E(\Lambda)$ of $\Lambda$ is given by $E(\Lambda) = Ext^*_{\Lambda}(\Lambda_0,\Lambda_0)$. For Koszul algebras $\Lambda= kQ/I$, $(kQ)_2\supseteq I$  is quadratic and generated by minimal uniform relations of homogeneous elements of degree 2, $E(\Lambda) \cong \Lambda^!$ and  $E(\Lambda)$ is generated in degrees 0 and 1~\cite{MV}.

For any two left $\Lambda$-modules $M$ and $N$, the Hochschild cohomology ring $\HH^*(\Lambda)$ acts on $Ext^*_{\Lambda}(M,N)$ in such a way that $Ext^*_{\Lambda}(M,N)$ is an $\HH^*(\Lambda)$-module. That is, there is a map 
\begin{equation}\label{equ1}
 \HH^*(\Lambda)\times Ext^*_{\Lambda}(M,N) \longrightarrow Ext^*_{\Lambda}(M,N)
\end{equation}
that defines a right (also left) module action of $\HH^*(\Lambda)$ on $Ext^*_{\Lambda}(M,N)$ ~\cite{HCSW}.
A consequence of the module action of equation~\eqref{equ1} is the following Proposition given in ~\cite{SVHCR}.

\begin{prop}
Let $M$ be a $\Lambda-$module. The map
\begin{equation}\label{phim}
\phi_M:\HH^*(\Lambda)\longrightarrow Ext^*_{\Lambda}(M,M)
\end{equation}
defined at the chain level by $\phi_M(f) = f\otimes 1_M$ is a ring homomorphism whose image is contained in the graded center
$Z_{gr}( Ext^*_{\Lambda}(M,M))$. 
\end{prop}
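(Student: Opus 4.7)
The plan is to establish all three claims---chain-level well-definedness, multiplicativity, and graded centrality of the image---by working with a single projective bimodule resolution $P_\bullet\to\Lambda$. Tensoring on the right with $M$ over $\Lambda$ produces a projective left $\Lambda$-module resolution $Q_\bullet := P_\bullet\otimes_\Lambda M \to M$, since each $P_i$ is a direct summand of a free bimodule. At the chain level $\phi_M$ sends $f\colon P_m\to\Lambda$ to the composition $P_m\otimes_\Lambda M \xrightarrow{f\otimes 1_M} \Lambda\otimes_\Lambda M = M$. Because $-\otimes_\Lambda M$ is an additive functor carrying the bimodule differentials of $P_\bullet$ to the $\Lambda$-module differentials of $Q_\bullet$, this defines a chain map $\HHom_{\Lambda^e}(P_\bullet,\Lambda)\to\HHom_{\Lambda}(Q_\bullet,M)$ that descends to a well-defined map on cohomology independent of cocycle representatives.

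For multiplicativity I would use the standard tensor-product formula for the cup product: fix a diagonal approximation $\Delta\colon P_\bullet \to P_\bullet\otimes_\Lambda P_\bullet$, a chain map lifting the identification $\Lambda\otimes_\Lambda\Lambda \cong \Lambda$. Then $f\smile g = \mu_\Lambda\circ (f\otimes_\Lambda g)\circ\Delta$, with $\mu_\Lambda$ the multiplication of $\Lambda$. Tensoring with $1_M$, the map $\Delta\otimes_\Lambda 1_M\colon Q_\bullet \to P_\bullet\otimes_\Lambda Q_\bullet$ simultaneously models the chain-level $\HH^*(\Lambda)$-action and provides a chain lift of $\phi_M(g)$ to a self-map of $Q_\bullet$ of the appropriate degree; composing with $\phi_M(f)$ then yields exactly $\phi_M(f\smile g)$, so $\phi_M(f\smile g)=\phi_M(f)\cdot\phi_M(g)$ holds already at the chain level.

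For graded centrality, given $h\in\Ext^n_\Lambda(M,M)$ with representative $h\colon Q_n\to M$, the cleanest argument passes to the derived category: an element $f\in\HH^m(\Lambda)$ corresponds to a morphism $\Lambda\to\Lambda[m]$ in $D^b(\Lambda^e)$, and the derived functor $-\otimes^{\mathbf L}_\Lambda M$ turns this into a natural transformation $\phi_M(f)\colon \id\Rightarrow [m]$ of endofunctors of $D^b(\Lambda\text{-mod})$. Naturality applied to the morphism $h\colon M\to M[n]$ produces a commuting square whose two compositions $M\to M[m+n]$ differ by the Koszul sign $(-1)^{mn}$ arising from interchanging the shifts, yielding $\phi_M(f)\cdot h = (-1)^{mn} h\cdot\phi_M(f)$; hence $\Img{\phi_M}\subseteq Z_{gr}(\Ext^*_\Lambda(M,M))$. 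The main obstacle is exactly this sign bookkeeping: a purely chain-level proof requires producing an explicit chain homotopy between the two composition orders, which one builds by invoking uniqueness up to homotopy of lifts through the projective resolution $Q_\bullet$ together with the cocommutativity of $\Delta$ up to homotopy; the derived-categorical argument bypasses this by reducing the issue to the standard Koszul rule for composing shifts.
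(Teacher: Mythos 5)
The paper does not actually prove this proposition: it is quoted from Snashall--Solberg~\cite{SVHCR} as a consequence of the module action in equation~\eqref{equ1}, so there is no internal proof to compare against. Measured against the standard argument in the literature, your proposal is essentially correct and takes the expected route: tensor a projective bimodule resolution $P_\bullet$ with $M$ to get a projective resolution $Q_\bullet$ of $M$, observe that $f\mapsto f\otimes 1_M$ is a chain map, deduce multiplicativity from a diagonal approximation $\Delta$ (which is exactly the mechanism this paper itself uses later, writing the cup product as $\phi\smallsmile\mu = \pi(\phi\otimes\mu)\Delta_{\K}$), and obtain graded centrality from naturality of $f\otimes^{\mathbf{L}}_\Lambda 1$ together with the shift sign $(-1)^{mn}$ --- in substance the Snashall--Solberg/Su\'arez-\'Alvarez argument.

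Two steps deserve tightening. First, from ``$P_i$ is a direct summand of a free bimodule'' you correctly get that each $Q_i = P_i\otimes_\Lambda M$ is a projective left $\Lambda$-module, but that alone does not make $Q_\bullet\to M$ a resolution, since $-\otimes_\Lambda M$ is only right exact. Exactness holds because each $P_i$ is projective, hence flat, as a right $\Lambda$-module, and then inductively every syzygy of $P_\bullet\to\Lambda$ is right-flat (in a short exact sequence with flat middle term and flat quotient, the kernel is flat), so the relevant $\mathrm{Tor}$ groups vanish; alternatively, for the bar resolution one can quote the right-$\Lambda$-linear contracting homotopy. Second, in the centrality step the naturality square commutes on the nose; the sign does not come from the square itself but from the identity $\phi_{M[n]}(f) = (-1)^{mn}\,\phi_M(f)[n]$, i.e., from commuting a degree-$m$ transformation of the identity functor past the $n$-fold shift, which in turn rests on the Koszul sign convention already built into $(\alpha\otimes\beta)(x\otimes y) = (-1)^{|\beta||x|}\alpha(x)\otimes\beta(y)$. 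Stating it this way makes the ``sign bookkeeping'' precise rather than gestural; with these two points spelled out your proof is complete.
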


For Koszul algebras, the image of the map $\phi_M$ was shown to be equal to the graded center $Z_{gr}( Ext^*_{\Lambda}(M,M))$, where $M\cong \Lambda/\mathfrak{r}$ ~\cite{MSKA}. The graded center is the set of all homogeneous $z\in Ext^*_{\Lambda}(M,M)$ for which
$z\alpha = (-1)^{|z||\alpha|}\alpha z$, for all homogeneous $\alpha$.
%%%%%%%%%%%%%%%%%%%%%%
%%%%%%%%%%%%%%%%%%%%%%

For a $k$-algebra $\Lambda$, denote by $\Lambda^e:=\Lambda\ot\Lambda^{op}$ its enveloping algebra having the tensor product algebra structure. $\Lambda^{op}$ is $\Lambda$ with the opposite multiplication. We have then that a left $\Lambda^e$-module is a $\Lambda$-bimodule and vice versa. In ~\cite{MSKA}, the following ideas were used to define a minimal projective resolution of $\Lambda$ as a $\Lambda^e$-module.
%%%%%%%%%%%%%

%%%%%%%%%%%
For a finite quiver $Q$, let $R=kQ$, and $I\subset kQ$ an ideal of $R$. It was shown in ~\cite{ROKA} that there exist integers $t_n$ and uniform elements $\{f^n_i\}_{i=0}^{t_n}$ in $R$ such that for $n\geq 0$, there is a filtration of right ideals 
$$\cdots \subseteq \bigoplus_{i=0}^{t_n} f^n_iR \subseteq \bigoplus_{i=0}^{t_{n-1}} f^{n-1}_iR\subseteq \cdots \subseteq\bigoplus_{i=0}^{t_1} f^1_iR\subseteq \bigoplus_{i=0}^{t_0} f^0_iR = R$$
in $R$. This filtration was then employed to construct a minimal projective resolution $\K_{\bullet}$ of $\Lambda$ which will be used in defining Hochschild cohomology. For the family of quiver algebras in~\eqref{quivalg}, take $\Lambda_0$ to be the subalgebra of $\Lambda$ generated by the vertices. We immediately view $R$ as the tensor algebra $T_{\Lambda_0}(\Lambda_1)$ where $\Lambda_1$ is the $\Lambda_0$-bimodule generated by the arrows in $Q$. Since $\Lambda$ is Koszul, each $f^n_i$ can be viewed as a linear combination of paths of length $n$. Hence it can be viewed as an element in $\Lambda_1^{\ot_{\Lambda_0}n}$ for all $n$. This makes it possible to embed them into the bar resolution. We choose  $\{f^0_i\}_{i=0}^{t_0}$ to be the set of vertices in $Q$,  $\{f^1_i\}_{i=0}^{t_1}$ to be the set of arrows in $Q$ while $\{f^2_i\}_{i=0}^{t_2}$ to be a minimal set of homogeneous generators of degree two for $I$. We use the notation $\Gamma^n$ to denote the set containing all linear combinations of homogeneous elements of degree $n$, viewed as elements of $\Lambda_1^{\ot_{\Lambda_0}n}.$\\
%%%%%%%%%%%%%%
In summary, by taking $e_i \;(i=0,1)$ to be the arrow of length $0$ (also idempotents) associated with the vertex $i$, we have\
\begin{align*} \Gamma^0 &= \{e_1,e_2\} = \{f_0^0,f_1^0\} ,  \; t_0 = |\Gamma^0| ,\\
\Gamma^1 &= \{a,b,c\} = \{f_0^1,f_1^1,f_2^1\} , \; t_1 = |\Gamma^1|,\\
\Gamma^2 &= \{a\otimes a, a\ot b - qb\otimes a, b\otimes b, a\otimes c \} \\
&= \{f^1_0\ot f^1_0, f^1_0\ot f^1_1 - qf^1_1\ot f^1_0, f^1_1\ot f^1_1, f^1_0\ot f^1_2 \} = \{f_0^2,f_1^2,f_2^2,f^2_3\} ,\; t_2 = |\Gamma^2|.
\end{align*}
and for  $ n\geq 2$ 
\begin{equation}\label{gammas}
\Gamma^n = \bigg\{f^n_i = \begin{cases} 
 a^{\otimes n}, &\text{ when } i=0 \\
 f^{n-1}_{s-1}\otimes b + (-q)^s f^{n-1}_s\otimes a ,&\text{ when } (0<i<n) \\
 b^{\otimes n}, &\text{ when } i=n\\
 a^{\otimes(n-1)}\otimes c,&\text{ when } i=n+1
\end{cases}\bigg\} 
\end{equation}
and set $t_n = |\Gamma^n|$. The minimal projective resolution $(\K, d)$ of $\Lambda$ over $\Lambda^e$-modules is given by 
\begin{equation}
\mathbb{K}_n = \bigoplus_{i=0}^{t_n}\Lambda o(f_i^n)\otimes_k t(f_i^n)\Lambda
\end{equation}
To describe the differentials $d$, we need to define the basis elements of $\K_n$ for each $n$. Let $\varepsilon^n_i = (0,\ldots,0,o(f^n_i)\otimes_k t(f^n_i),0,\ldots,0)\in\mathbb{K}_n$ where the element $o(f^n_i)\otimes_k t(f^n_i)$ is in the $i$-th position counting from 0. We now define the differentials on the resolution
\begin{equation}
\cdots \stackrel{d_{n+1}}{\rightarrow}\K_n\stackrel{d_n}{\rightarrow}\K_{n-1}\stackrel{d_{n-1}}{\rightarrow}\cdots \stackrel{d_2}{\rightarrow}\K_1\stackrel{d_1}{\rightarrow}\K_0\stackrel{\pi}{\rightarrow}\Lambda \rightarrow 0
\end{equation}
as follows
\begin{align*}
d_1(\varepsilon^1_2) &= c\varepsilon^0_1 - \varepsilon^0_0 c\\
d_n(\varepsilon^n_i) &= (1-\partial_{n,i})[a\varepsilon^{n-1}_i +(-1)^{n-i}q^i\varepsilon^{n-1}_i a] \\
&+(1-\partial_{i,0})[(-q)^{n-i}b\varepsilon^{n-1}_{i-1}+(-1)^{n}\varepsilon^{n-1}_{i-1} b], \;\;\text{for}\;\;i\leq n\\
d_n(\varepsilon^n_{n+1}) &= a\varepsilon^{n-1}_n + (-1)^n\varepsilon^{n-1}_0 c, \;\;\text{when}\;\;n\geq 2
\end{align*}
where $\partial_{i,j} = \begin{cases} 1 & \text{ if } i=j\\ 0, &\text{ otherwise } \end{cases}$. Let $\B = \B_{\bullet}(\Lambda)$ denote the bar resolution of $\Lambda$ given by;
\begin{equation}
\B_{\bullet}: = \qquad \qquad\cdots \rightarrow \Lambda^{\ot (n+2)}\stackrel{\delta_n}{\rightarrow}\Lambda^{\ot (n+1)}\stackrel{\delta_{n-1}}{\rightarrow} \cdots \stackrel{\delta_2}{\rightarrow}\Lambda^{\ot 3}\stackrel{\delta_1}{\rightarrow}\Lambda^{\ot 2}\stackrel{\pi}{\rightarrow}\Lambda \rightarrow 0
\end{equation}
The differentials $\delta_i$ are given by
\begin{equation}
\delta_n(a_0\otimes a_1\otimes\cdots\otimes a_{n+1}) = \sum_{i=0}^n (-1)^i a_0\otimes\cdots\otimes a_ia_{i+1}\otimes\cdots\otimes a_{n+1}
\end{equation}
for each element $a_i\in \Lambda \;(0\leq i\leq n+1)$ and $\pi$, the multplication map. The Hochschild cohomology of $\Lambda$ is defined to be
$$\HH^*(\Lambda) = Ext^*_{\Lambda^e}(\Lambda,\Lambda) = \bigoplus_{n\geq 0} H^n(\HHom_{\Lambda^e}(\B_{\bullet}(\Lambda),\Lambda))$$
%%%%%%%%%
There is a natural embedding of $\K$ into $\B$. That is, there is a map
$$\iota_{\bullet}: \K_{\bullet}\longrightarrow\B_{\bullet}$$
lifting the identity on $\Lambda$ such that for each $n$, $\iota_n(\varepsilon^n_i) = 1\otimes f^n_i\otimes 1$. We recall that $\mathbb{B}_n = \Lambda^{\otimes (n+2)}$, and so, the following diagram is commutative  that is  $\iota_{n-1} d_{n} (\varepsilon^n_i) = \delta_n \iota_n (\varepsilon^n_i)$ (see Proposition 2.1 of~\cite{MSKA} for a proof).
\begin{equation}\label{comm diag}
\xymatrix{\K_{\bullet} : \cdots \ar[r] & \K_2 \ar@{-->}[d]_-{\iota_2} \,\,\ar[r]^-{d_2} & \,\,\, \K_1 \ar@{-->}[d]_-{\iota_1} \ar[r]^-{d_1} 
& \K_0\ar[d]_-{\iota_0}  \ar[r]^-{\pi} & \Lambda \ar[d]_-{1_{\Lambda}} \ar[r] & 0\\
\B_{\bullet} : \cdots \ar[r] & \B_2  \ar[r]^-{\delta_2} & \B_1 \,\,\, \ar[r]^-{\delta_1} & \,\,\B_0 \ar[r]^-{\pi} & \Lambda \ar[r] & 0 }
\end{equation}\ \\
We apply the functor $\HHom_{\Lambda^e}(-,\Lambda)$ to the projective resolution $\K$ of $\Lambda^e$- modules. Defining  $\mathbb{\hat{K}}_i = \HHom_{\Lambda^e}(\K_i,\Lambda),$ we obtain the following complex\\
\\
$0 \longrightarrow \mathbb{\hat{K}}_0 \stackrel{d_1^*}\longrightarrow\mathbb{\hat{K}}_1 \stackrel{d_2^*}\longrightarrow \mathbb{\hat{K}}_2 \stackrel{d_3^*}\longrightarrow \mathbb{\hat{K}}_3 \stackrel{d_4^*}\longrightarrow \mathbb{\hat{K}}_4 \stackrel{d_5^*}\longrightarrow \mathbb{\hat{K}}_5
\stackrel{d_6^*}\longrightarrow \mathbb{\hat{K}}_6 \longrightarrow \cdots$\\
\\
We note that $t_0=1$, therefore,
\begin{align*}
\mathbb{\hat{K}}_0 = \HHom_{\Lambda^e}(\mathbb{K}_0,\Lambda) &= \HHom_{\Lambda^e}(\Lambda o(f^0_0)\otimes_k t(f^0_0)\Lambda \oplus \Lambda o(f^0_1)\otimes_k t(f^0_1)\Lambda,\Lambda)\\
&= \HHom_{\Lambda^e}(\Lambda o(f^0_0)\otimes_k t(f^0_0)\Lambda,\Lambda)\oplus \HHom_{\Lambda^e}(\Lambda o(f^0_1)\otimes_k t(f^0_1)\Lambda,\Lambda)\\
&= o(f^0_0)\Lambda t(f^0_0)\oplus o(f^0_1)\Lambda t(f^0_1)
\end{align*}
For a fixed $n$ and $i$, let $\phi\in \HHom_{\Lambda^e}(\Lambda o(f^n_i)\otimes_k t(f^n_i)\Lambda,\Lambda)$, suppose that $\phi(0,\cdots,0,o(f^n_i)\otimes_k t(f^n_i),0,\cdots,0) = \phi(\varepsilon^n_i) =\lambda^n_i\in \Lambda$, then
\begin{align*}
o(f^n_i)\lambda^n_i t(f^n_i) &= o(f^n_i)\phi(o(f^n_i)\otimes_k t(f^n_i)) t(f^n_i)\\
 &= \phi(o(f^n_i)^2\otimes_k t(f^n_i)^2) = \phi(o(f^n_i)\otimes_k t(f^n_i)) = \lambda^n_i 
 \end{align*}
where each $o(f^n_i), t(f^n_i)$ is any of the idempotents $e_1$ or $e_2$. We have the following isomorphisms of $\Lambda^e$-modules: $\HHom_{\Lambda^e}(\Lambda o(f^n_i)\otimes_k t(f^n_i)\Lambda,\Lambda) \simeq o(f^n_i)\Lambda \; t(f^n_i).$
So 
\begin{equation}
 \hat{\mathbb{K}}_n = \HHom_{\Lambda^e}(\mathbb{K}_n,\Lambda)= \bigoplus_{i=0}^{t_n}o(f^n_i)\Lambda \; t(f^n_i)
 \end{equation}
In general when $\phi\in \hat{\mathbb{K}_n}$ such that $\phi(\varepsilon^n_i) = \lambda^n_i$ for $0\leq i\leq t_n$, we simply write 
$$\phi = (\lambda^n_0,\cdots,\lambda^n_i,\cdots, \lambda^n_{t_n}).$$

%%%%%%%%%%%%%%%%%%%%%%%%%%%%%%%%%%%%%%%%%%%%%NEW SECTION
\section{Proof of Main Results}\label{results}%%%%%%%%%%%%%%%%Main results
In this section, we give a sequence of propositions leading to the main results. We start by finding elements of Hochschild 0-cocycles, which in theory corresponds to elements in the center of the algebra.\\
\\
%%%%%%%%%%%%%%center of algebra
\textbf{The 0th Hochschild cohomology  ($\HH^0(\Lambda) = \frac{\ker d^*_1}{\Img 0}$).}

Let $\phi\in \ker d^*_1\subseteq \hat{\mathbb{K}_0} = \HHom_{\Lambda^e}(\mathbb{K}_0,\Lambda)$,  such that $\phi = (\lambda^0_0,\lambda^0_1)$, for some $\lambda^0_1,\lambda^0_1\in \Lambda.$ We solve for the $\lambda^0_i$ $(i=0,1)$ for which $d^*_1\phi(\varepsilon^1_i) = 0$ as follows
   \begin{align*}
   d^*_1\phi(\varepsilon^1_0) = \phi d_1(\varepsilon^1_0) &= \phi(a(\varepsilon^0_0) +(-1)^1q^0 (\varepsilon^0_0)a) \\
   &= a\lambda^0_0  - \lambda^0_0 a= 0\\
   d^*_1\phi(\varepsilon^1_1) = \phi d_1(\varepsilon^1_1) &= \phi((-q)^0b(\varepsilon^0_0) - (\varepsilon^0_0)b) \\
   &= b\lambda^0_0  - \lambda^0_0 b= 0\\
   d^*_1\phi(\varepsilon^1_2) = \phi d_1(\varepsilon^1_2) &= \phi(c(\varepsilon^0_1) - (\varepsilon^0_0)c) \\
   &= c\lambda^0_1  - \lambda^0_0 c= 0
\end{align*}
\textbf{If $q=1$}, then $ab-ba=0,$ we get the following set of solutions: $\phi = (a,0)$, $ (ab,0)$,$(0,a)$, $(0,b)$, $(e_1,e_2)$ or $ (0,e_1).$ By identifying each solution $( \lambda^0_0 ,  \lambda^0_1) $ with  $ (o(f^0_0) \lambda^0_0 t(f^0_0), o(f^0_1) \lambda^0_1 t(f^0_1)) = ( e_1 \lambda^0_0 e_1, e_2 \lambda^0_1 e_2) $,  we need to have $o(\lambda^0_0) = t(\lambda^0_0)=e_1$ and  $o(\lambda^0_1) = t(\lambda^0_1)=e_2,$ we eliminate some solutions to have the following unique set of solutions $\phi_1 =  (a , 0), \phi_2= (ab , 0)$  and $\phi_3 = (e_1, e_2) $ \\
    \\
\textbf{If $q=- 1$}, then $ab+ba=0, $ we get the same unique set of solutions: $\phi_1 = (a , 0)$ (if $char(k)=2),  \phi_2= (ab , 0)$ and  $\phi_3 = (e_1, e_2). $\\
\\
\textbf{If $q\neq 1$}, then $ab-qba=0,$ we get $\phi_2 = (ab , 0)$  and $\phi_3 = (e_1, e_2). $ Therefore, the $\Lambda^e$-module homomorphisms $\phi_1, \phi_2, \phi_3$ form a basis for the kernel of $d^*_1$ as a $k$-vector space. That is, 
$$\ker d^*_1 = span_k\{ \phi_1,\phi_2,\phi_3 \}.$$
In summary we obtain for any $q\in k$ that,
$$ \HH^0(\Lambda) = \frac{\ker d^*_1}{\Img 0} = \begin{cases}
span_k\{ (a,0),(ab,0),(e_1,e_2)\}, &\text{ if  } q=1 \\
span_k\{ (ab,0),(e_1,e_2)\}, &\text{ if  } q=-1 \\
span_k\{ (a,0),(ab,0),(e_1,e_2)\}, &\text{ if  } q=-1 \text{ and } char(k)=2\\
span_k\{ (ab,0),(e_1,e_2)\}, &\text{ for every other  } q\neq\pm1\\
\end{cases}$$
where each ordered pairs are in $e_1\Lambda e_1\oplus e_2\Lambda e_2$.\\
\begin{rema}\label{bigrem}
We note that each Hochschild 0-cocycle of the set $\{(a,0),(ab,0)\}$ corresponds to an element in the set $\{a,ab\}$ of the center of the algebra $\Lambda_q$. As we will see later in Remark ~\ref{bigrem1}, these elements are nilpotent with respect to the cup product but the 0-cocycle $\phi_3 = (e_1,e_2)$ is not, since $e_1,e_2$ are idempotent elements. We then identify $span_k\{ (e_1,e_2)\}$ to be the subalgebra of $\HH^*(\Lambda_q)$ isomorphic to $k$ because $e_1+e_2 = 1_{\Lambda_q}$. This brings us to make the following deduction for any $q\in k$
\begin{equation}\label{center}
 \HH^0(\Lambda)/\mathcal{N} = \frac{\ker d^*_1}{\Img 0} = span_k\{ (e_1,e_2)\}\cong k.
 \end{equation}
\end{rema}\ \\
\textbf{Higher Hochschild cocycles }

We now give the following counting proposition about the dimension of the kernels of the differentials  $d^{*}_{n+1}:\hat{\K}_n\rightarrow \hat{\K}_{n+1}$.
%%%%%%%%%%%%%%%% proof of first proposition
%%%%%%%%%%%%%%
\begin{prop}\label{prop}
Let $k$ be a field and let $\Lambda_q = \frac{kQ}{I}$ where $Q$ is the quiver of equation~\eqref{quivalg}. Hochschild cohomology group is given by $HH^n(\Lambda) = \frac{ker d^*_{n+1}}{Im\;d^*_n}$, and for $n\neq 0$
\begin{center}
$\dim (ker\;d^*_{n+1}) = \begin{cases} 
2(n+2),\;\textit{when n is odd} \\
\frac{5n}{2}+4,\;\textit{when n is even} 
\end{cases}$  \qquad $q =  1$ \\[4pt]
$\dim (ker\;d^*_{n+1})  = \begin{cases} 
\frac{5n}{2}+4,\;\textit{when n is odd}\\
2(n+2),\;\textit{when n is even} 
\end{cases}$  \qquad $q = -1$ \\[4pt]
$\dim (ker\;d^*_{n+1})  = n+2,\;\textit{for any integer n }$  \qquad $q \neq \pm 1$ 
\end{center} 
as a $k$-vector space.
\end{prop}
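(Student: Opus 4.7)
The plan is to translate the kernel condition $d^{*}_{n+1}\phi=0$ into an explicit finite linear system by choosing monomial bases for the summands of $\hat{\K}_n$, and then to compute the dimension of the kernel by a case analysis on $q$ and the parity of $n$. From the description of $\Gamma^n$ in \eqref{gammas} one reads off that $o(f^n_i)=t(f^n_i)=e_1$ for $0\le i\le n$, while $o(f^n_{n+1})=e_1$ and $t(f^n_{n+1})=e_2$. Using the defining relations $a^2=b^2=ac=0$ and $ab=qba$, short calculations give $e_1\Lambda e_1=\mathrm{span}_k\{e_1,a,b,ab\}$ and $e_1\Lambda e_2=\mathrm{span}_k\{c,bc\}$, so I would write every cocycle candidate in the form $\lambda^n_i=\alpha_i e_1+\beta_i a+\gamma_i b+\delta_i ab$ for $0\le i\le n$ and $\lambda^n_{n+1}=\mu c+\nu bc$, starting from $\dim_k\hat{\K}_n=4n+6$ scalar parameters.

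The next step is to pin down the linear system. Evaluating $\phi\circ d_{n+1}$ on each of the $n+3$ basis elements $\varepsilon^{n+1}_j$ and expanding in the monomial bases above (using $a^2=b^2=abc=bac=0$ and $ba=q^{-1}ab$) yields three boundary equations ($j=0,\,n+1,\,n+2$) and $n$ interior equations ($1\le j\le n$). The boundary equation from $\varepsilon^{n+1}_{n+2}$ reads $a\lambda^n_{n+1}+(-1)^{n+1}\lambda^n_0 c=0$; since $a\lambda^n_{n+1}=0$ and $\lambda^n_0 c=\alpha_0 c+\gamma_0 bc$, it forces $\alpha_0=\gamma_0=0$ for every $q$. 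Each interior equation couples $\lambda^n_{j-1}$ and $\lambda^n_j$ and, after expansion, decomposes into three scalar relations on the $a$-, $b$-, and $ab$-coefficients. The scalar coefficients that arise have shape $1+(-1)^{n+1-j}q^j$, $(-q)^{n+1-j}+(-1)^{n+1}$, and $(-q)^{n+1-j}q^{-1}+(-1)^{n+1}$: exactly the expressions that degenerate when $q=\pm 1$.

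For the case analysis, when $q\neq\pm1$ (and $\mathrm{char}(k)\neq 2$) none of those coefficients vanish, so the interior system is triangular: the $\alpha_j$'s, $\beta_n$, and certain $\gamma_j$'s are forced to zero, and each surviving $\gamma_j$ becomes a scalar multiple of $\beta_{j-1}$. Counting the survivors should yield $n+2$. When $q=\pm 1$, the vanishing of $1+(-1)^{n+1-j}q^j$ depends on the parity of $n-j$, so certain rows collapse into trivial identities and restore parameters that were otherwise killed. Splitting further on the parity of $n$ should recover the two formulas $2(n+2)$ and $\tfrac{5n}{2}+4$; the interchange of parities between $q=1$ and $q=-1$ should fall out because replacing $q$ by $-q$ shifts every parity test by one.

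The main obstacle will be the combinatorial bookkeeping: the coupling between $\lambda^n_{j-1}$ and $\lambda^n_j$ behaves as a linear recursion in $j$ whose effective rank depends delicately on $q$ and on parities, so writing down a uniform formula requires care. The cleanest route is probably to solve the generic case first (triangular, almost everything pinned to zero) and then treat $q=\pm 1$ as degenerations in which specific rows disappear and return the relevant $\alpha_j$'s or $\gamma_j$'s as free parameters. One must also verify that the remaining two boundary equations (at $j=0$ and $j=n+1$) add no conditions beyond those already extracted from the interior recursion, and check that the counts are indeed integers in every subcase (which is why the $q=-1$ formula $\tfrac{5n}{2}+4$ is paired with $n$ even).
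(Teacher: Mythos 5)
Your setup is the honest version of this computation, and it is genuinely different from what the paper does: the paper never solves the coupled system at all, but instead decouples it by fiat, setting $\phi^n_{r-1}=\phi^n_{r+1}=0$ and tabulating the ``single-position'' solutions $\phi^n_r$ position by position, then summing the table entries. You, by contrast, keep the coupling between $\lambda^n_{j-1}$ and $\lambda^n_j$, and your scalar relations are correct: writing $\lambda^n_j=\alpha_j e_1+\beta_j a+\gamma_j b+\delta_j ab$, the interior equation $E_j$ gives $\alpha_j\bigl(1+(-1)^{n+1-j}q^j\bigr)=0$, $\alpha_{j-1}\bigl((-q)^{n+1-j}+(-1)^{n+1}\bigr)=0$, and $\gamma_j\bigl(1+(-1)^{n+1-j}q^{j-1}\bigr)+\beta_{j-1}\bigl((-q)^{n+1-j}q^{-1}+(-1)^{n+1}\bigr)=0$. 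But your plan has two concrete gaps. First, the decisive step is never performed: ``counting the survivors should yield $n+2$'' is an assertion, not a derivation, and it is exactly here that the argument breaks. Second, your claim that for $q\neq\pm1$ none of the coefficients vanish, so that the system is triangular, is false for parity reasons independent of $q$: the coefficient of $\gamma_1$ is $1+(-1)^{n}$, which vanishes identically for $n$ odd, and the coefficient of $\beta_{n-1}$ in $E_n$ is $-1+(-1)^{n+1}$, which also vanishes for $n$ odd (and for $q$ a root of unity other than $\pm1$ still more coefficients die, so the uniform trichotomy in the statement is itself suspect).

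The deeper problem is that if you complete your own computation you do not reach the stated dimensions, so no bookkeeping can close the gap. The $\delta_j$ and the two coordinates $\mu,\nu$ of $\lambda^n_{n+1}$ appear in no equation at all (since $a\cdot ab=ab\cdot a=b\cdot ab=ab\cdot b=ac=abc=0$), so $\dim\ker d^*_{n+1}\geq n+3$ for every $q$, already exceeding the claimed $n+2$ when $q\neq\pm1$; note the paper's own proof ends its $q\neq\pm1$ count with $n+4$, contradicting its stated $n+2$. Moreover each coupled $ab$-relation kills only one parameter per pair $(\beta_{j-1},\gamma_j)$, not two, producing two-position cocycles invisible to the paper's single-position ansatz: for $n=2$ and generic $q$ (char $k\neq 2$) the tuple $\bigl(a,\,-\tfrac{q-1}{2}\,b,\,0,\,0\bigr)$ is a cocycle and one finds $\dim\ker d^*_3=7=2n+3$, versus $n+2=4$ claimed and $n+4=6$ from the paper's table; for $n=3$, $q=1$ the cocycle $(0,a,-b,0,0)$ makes $\dim\ker d^*_4=11$ rather than the claimed $2(n+2)=10$. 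So the numbers in the Proposition are not recoverable from the (correct) linear system you wrote down; the paper's proof only appears to reach them because its decoupling assumption discards these coupled solutions, and even then it disagrees with its own statement in the $q\neq\pm1$ case. A sound write-up along your lines would end by correcting the dimension formulas, not confirming them.
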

\begin{proof}
Let $\phi\in ker\;d^*_{n+1}$, with $\phi = (\phi^n_0,\phi^n_1,\cdots,\phi^n_n,\phi^n_{n+1})$. The elements  $\phi^n_i = \phi(\varepsilon^n_i), i=0,\cdots,n+1$ are obtained by solving the following set of equations

\textit{\textbf{For any $n$ or $q$ }}
\begin{align*}
d^*_{n+1}\phi(\varepsilon^{n+1}_0) &= a\phi(\varepsilon^n_0) + (-1)^{n+1}\phi(\varepsilon^n_0)a\\
&= a\phi^n_0 \pm \phi^n_0a = 0 \qquad \text{and}\\
d^*_{n+1}\phi(\varepsilon^{n+1}_{n+2}) &= a\phi(\varepsilon^n_{n+1}) + (-1)^{n+1}\phi(\varepsilon^n_0)c\\
&= a\phi^n_{n+1} \pm \phi^n_0c = 0
\end{align*}
for these set of equations to be zero, we should have $\phi^n_0\in span_k\{a,c,ab,bc\}$ and $\phi^n_{n+1}\in span_k\{a,c,ab,bc\}$. But we recall that $\phi^n_0\in e_1\Lambda e_1$, and $\phi^n_{n+1}\in e_1\Lambda e_2$, we thus obtain the following $\phi^n_0\in span_k\{a,ab\}$ and $\phi^n_{n+1}\in span_k\{c, bc\}.$ The rest of this proof involves solving the general set of equations;
\begin{align*}
d^*_{n+1}\phi(\varepsilon^{n+1}_r) &= a\phi(\varepsilon^n_r) + (-1)^{n+1-r}q^r\phi(\varepsilon^n_r)a + (-q)^{n+1-r}b\phi(\varepsilon^n_{r-1}) + (-1)^{n+1}\phi(\varepsilon^n_{r-1})b\\
&= a\phi^n_r + (-1)^{n+1-r}q^r\phi^n_r a + (-q)^{n+1-r} b\phi^n_{r-1} + (-1)^{n+1} \phi^n_{r-1} b = 0\\
d^*_{n+1}\phi(\varepsilon^{n+1}_{r+1}) &= a\phi^n_{r+1} + (-1)^{n-r}q^{r+1}\phi^n_{r+1} a + (-q)^{n-r} b\phi^n_{r} + (-1)^{n+1} \phi^n_{r} b = 0
\end{align*}
for different values of $n,r$ and $q$. We recall that $q=\pm1$ implies $ab  \mp ab=0$

\textbf{\textit{When $n$ is even, $r$ is even, $q=1$}}\\
we obtain $\phi^n_r$ by setting $\phi^n_{r-1} = \phi^n_{r+1} = 0$ solving
\begin{align*}
d^*_{n+1}\phi(\varepsilon^{n+1}_r) &= a\phi^n_r - \phi^n_r a = 0 \;\;\text{ and }\\
d^*_{n+1}\phi(\varepsilon^{n+1}_{r+1}) &=  b\phi^n_{r} - \phi^n_{r} b=0.
\end{align*}
We can only obtain both equations equal to 0 if $\phi^n_{r}\in span_k\{a,b,ab,bc, e_1\}$. Again we recall that $\phi^{n}_r\in e_1\Lambda e_1$, so $\phi^n_{r}\in span_k\{a,b,ab, e_1\}$.

\textbf{\textit{When $n$ is even, $r$ is odd, $q=1$}}\\
we obtain $\phi^n_r$ by setting $\phi^n_{r-1} = \phi^n_{r+1} = 0$ solving
\begin{align*}
d^*_{n+1}\phi(\varepsilon^{n+1}_r) &= a\phi^n_r + \phi^n_r a = 0 \;\;\text{ and }\\
d^*_{n+1}\phi(\varepsilon^{n+1}_{r+1}) &=  - b\phi^n_{r} - \phi^n_{r} b=0
\end{align*}
We can only obtain both equations equal to 0 if $\phi^n_{r}\in span_k\{ab,bc\}$. Again $\phi^{n}_r\in e_1\Lambda e_1,$ so $\phi^n_{r}\in span_k\{ab\}$.

\textbf{\textit{When $n$ is odd, $r$ is even, $q=1$}}\\
we obtain $\phi^n_r$ by setting $\phi^n_{r-1} = \phi^n_{r+1} = 0$ solving
\begin{align*}
d^*_{n+1}\phi(\varepsilon^{n+1}_r) &= a\phi^n_r + \phi^n_r a = 0 \;\;\text{ and }\\
d^*_{n+1}\phi(\varepsilon^{n+1}_{r+1}) &=  - b\phi^n_{r} + \phi^n_{r} b=0
\end{align*}
We can only obtain both equations equal to 0 if $\phi^n_{r}\in span_k\{a,ab,bc\}$ and finally we get $\phi^n_{r}\in span_k\{a,ab\}$.

\textbf{\textit{When $n$ is odd, $r$ is odd, $q=1,$}}\\
we obtain $\phi^n_r$ by setting $\phi^n_{r-1} = \phi^n_{r+1} = 0$ solving
\begin{align*}
d^*_{n+1}\phi(\varepsilon^{n+1}_r) &= a\phi^n_r - \phi^n_r a = 0 \;\;\text{ and }\\
d^*_{n+1}\phi(\varepsilon^{n+1}_{r+1}) &=   b\phi^n_{r} + \phi^n_{r} b=0
\end{align*}
we can only obtain both equations equal to 0 if $\phi^n_{r}\in span_k\{ab,bc, b\}$. Like before we obtain $\phi^n_{r}\in span_k\{b, ab\}.$

\textbf{\textit{When $n$ is even, $r$ is even, $q=-1$}}\\
we obtain $\phi^n_r$ by setting $\phi^n_{r-1} = \phi^n_{r+1} = 0$ solving
\begin{align*}
d^*_{n+1}\phi(\varepsilon^{n+1}_r) &= a\phi^n_r - \phi^n_r a = 0 \;\;\text{ and }\\
d^*_{n+1}\phi(\varepsilon^{n+1}_{r+1}) &=  b\phi^n_{r} - \phi^n_{r} b=0
\end{align*}
We have the solution $\phi^n_{r}\in span_k\{ab, e_1\}$.

\textbf{\textit{When $n$ is even, $r$ is odd, $q=-1$}}\\
we obtain $\phi^n_r$ by setting $\phi^n_{r-1} = \phi^n_{r+1} = 0$ solving
\begin{align*}
d^*_{n+1}\phi(\varepsilon^{n+1}_r) &= a\phi^n_r + \phi^n_r a = 0 \;\;\text{ and }\\
d^*_{n+1}\phi(\varepsilon^{n+1}_{r+1}) &=   b\phi^n_{r} - \phi^n_{r} b=0
\end{align*}
We can only obtain both equations equal to 0 if $\phi^n_{r}\in span_k\{ab,b\}$.

\textbf{\textit{When $n$ is odd, $r$ is even, $q=-1$}}\\
we obtain $\phi^n_r$ by setting $\phi^n_{r-1} = \phi^n_{r+1} = 0$ solving
\begin{align*}
d^*_{n+1}\phi(\varepsilon^{n+1}_r) &= a\phi^n_r + \phi^n_r a = 0 \;\;\text{ and }\\
d^*_{n+1}\phi(\varepsilon^{n+1}_{r+1}) &=   b\phi^n_{r} + \phi^n_{r} b=0
\end{align*}
We can only obtain both equations equal to 0 if $\phi^n_{r}\in span_k\{a,b,ab\}$.

\textbf{\textit{When $n$ is odd, $r$ is odd, $q=-1$}}\\
we obtain $\phi^n_r$ by setting $\phi^n_{r-1} = \phi^n_{r+1} = 0$ solving
\begin{align*}
d^*_{n+1}\phi(\varepsilon^{n+1}_r) &= a\phi^n_r - \phi^n_r a = 0 \;\;\text{ and }\\
d^*_{n+1}\phi(\varepsilon^{n+1}_{r+1}) &=   b\phi^n_{r} + \phi^n_{r} b=0
\end{align*}
we obtain $\phi^n_{r}\in span_k\{a,ab\}$.

%%%%%%%%%%% for any other q not +1 or -1
\textbf{For any other }$q\neq\pm 1$\\
If $n$  is even and $r$ is even, we obtain
\begin{align*}
d^*_{n+1}\phi(\varepsilon^{n+1}_r) &= a\phi^n_r - q^r \phi^n_r a = 0 \;\;\text{ and }\\
d^*_{n+1}\phi(\varepsilon^{n+1}_{r+1}) &=  q^{n-r} b\phi^n_{r} - \phi^n_{r} b=0
\end{align*}
 we obtain the trivial solutions $\phi^n_{r}\in span_k\{ab\}$.

If $n$  is even and $r$ is odd, we obtain
\begin{align*}
d^*_{n+1}\phi(\varepsilon^{n+1}_r) &= a\phi^n_r + q^r \phi^n_r a = 0 \;\;\text{ and }\\
d^*_{n+1}\phi(\varepsilon^{n+1}_{r+1}) &=  -q^{n-r} b\phi^n_{r} - \phi^n_{r} b=0
\end{align*}
with the trivial solutions $\phi^n_{r}\in span_k\{ab\}$.

In similar ways we obtain the trivial solutions for $n$ odd and $r$ even or odd. But we must have $\phi^n_r\in e_1\Lambda e_1, \;(0\leq r\leq n)$ and $\phi^n_{n+1}\in e_1\Lambda e_2$ , we come to conclude that $\phi^n_{r}\in span_k\{ab\}$, whenever $q\neq\pm 1$.\\ 
\\
%%%%%%%%%%% THIS IS THE SUMMARY TABLE OF SOLUTIONS
The following table is a summary of solutions stating generators for each $\phi^n_r$;\\
%%%q=1
\begin{tabular}{|p{1in}|p{1in}|p{1in}||p{1in}|p{1in}|}\hline
\textbf{$q= 1$} \\\hline
&\multicolumn{2}{|c|}{$n$ is even}&\multicolumn{2}{|c|}{$n$ is odd}\\\hline
&$r$ is even&$r$ is odd&$r$ is even&$r$ is odd\\\cline{1-5}
$\phi^n_0$&\multicolumn{2}{|c|}{$a,ab$}&\multicolumn{2}{|c|}{$a,ab$}\\\hline
$\phi^n_r$&$a,b,ab,e_1$&$ab$&$a,ab$&$b,ab$\\\cline{1-5}
$\phi^n_{n+1}$&\multicolumn{2}{|c|}{$c,bc$}&\multicolumn{2}{|c|}{$c,bc$}\\\hline
\end{tabular}\ \\
%%%%q=-1
\begin{tabular}{|p{1in}|p{1in}|p{1in}||p{1in}|p{1in}|}\hline
\textbf{$q= -1$} \\\hline
&\multicolumn{2}{|c|}{$n$ is even}&\multicolumn{2}{|c|}{$n$ is odd}\\\hline
&$r$ is even&$r$ is odd&$r$ is even&$r$ is odd\\\cline{1-5}
$\phi^n_0$&\multicolumn{2}{|c|}{$a,ab$}&\multicolumn{2}{|c|}{$a,ab$}\\\hline
$\phi^n_r$&$ab,e_1$&$b,ab$&$a,b,ab$&$a,ab$\\\cline{1-5}
$\phi^n_{n+1}$&\multicolumn{2}{|c|}{$c,bc$}&\multicolumn{2}{|c|}{$c,bc$}\\\hline
\end{tabular}\ \\
\\
%%%%$q\neq \pm 1$
\begin{tabular}{|p{1in}|p{1in}|p{1in}||p{1in}|p{1in}|}\hline
\textbf{$q\neq \pm 1$} \\\hline
&\multicolumn{2}{|c|}{$n$ is even}&\multicolumn{2}{|c|}{$n$ is odd}\\\hline
&$r$ is even&$r$ is odd&$r$ is even&$r$ is odd\\\cline{1-5}
$\phi^n_0$&\multicolumn{2}{|c|}{$a,ab$}&\multicolumn{2}{|c|}{$a,ab$}\\\hline
$\phi^n_r$&$ab$&$ab$&$ab$&$ab$\\\cline{1-5}
$\phi^n_{n+1}$&\multicolumn{2}{|c|}{$c,bc$}&\multicolumn{2}{|c|}{$c,bc$}\\\hline
\end{tabular}\ \\
From these tables, we make the following deductions;
$$(n \text{ is even and } q=+1):\qquad dim(Ker d^*_{n+1}) = 2 + (\stackrel{(odd-positions)}{\frac{n}{2}\times 1} + \stackrel{(even-positions)}{\frac{n}{2}\times 4})+2 = 5(\frac{n}{2})+4$$
$$(n \text{ is odd and } q=+1):\qquad dim(Ker d^*_{n+1}) = 2 + (\stackrel{(odd-positions)}{\frac{n}{2}\times 2} + \stackrel{(even-positions)}{\frac{n}{2}\times 2})+2 = 2(n+2)$$
$$(n \text{ is even and } q=-1):\qquad dim(Ker d^*_{n+1}) = 2 + (\stackrel{(odd-positions)}{\frac{n}{2}\times 2} + \stackrel{(even-positions)}{\frac{n}{2}\times 2})+2 = 2(n+2)$$
$$(n \text{ is odd and } q=-1):\qquad dim(Ker d^*_{n+1}) = 2 + (\stackrel{(odd-positions)}{\frac{n}{2}\times 2} + \stackrel{(even-positions)}{\frac{n}{2}\times 3})+2 = 5(\frac{n}{2})+4$$
$$(\text{ for any }n,\; q\neq\pm 1):\qquad dim(Ker d^*_{n+1}) = 2 + (\stackrel{(odd-positions)}{\frac{n}{2}\times 1} + \stackrel{(even-positions)}{\frac{n}{2}\times 1})+2 = n+4$$
\end{proof}
\begin{rema}\label{bigrem0}
We note that these dimensions grow linearly as the homological dimension $n$ grows. We also observe from the tables in the proof of Proposition~\ref{prop} that if $q=\pm 1$, there are Hochschild n-cocycles of the form $\phi = (0,\cdots,0,e_1,0,\cdots,0)$ i.e $\phi^n_i = e_1$, where both $n$ and $i$ are even. We will later see that these are the only non-nilpotent elements. Whenever $n$ is odd, there is no $\phi$ for which $\phi^n_i = e_1$. This is equivalent to saying that all elements of odd homological degrees are nilpotent with respect to the cup product.
\end{rema}\ \\
%%%%%%%%%%%%%%%%%%%%%%%%%%%%%%%
%%%%%%%%%%%%%%%%%%%%%%%%%%%%%%%
%%%%%%%%%%%%%%%%%%%%%%%%%%%%%%%CUP PRODUCT FORMULA
\textbf{A Cup product formula on Hochschild cohomology}

We will now define a cup product formula at the chain level for cocycles in Hochschild cohomology. It was shown in ~\cite{ROKA} that the minimal projective resolution $\K$ of $\Lambda$ posseses a ``comultiplicative structure\rq\rq{}. That is, there is a comultiplication map $\Delta_{\K}: \K\rightarrow \mathbb{K}\otimes_{\Lambda}\mathbb{K}$ such that $(\Delta\ot1)\Delta = (1\ot\Delta)\Delta$. Recall that $\K$ embeds nicely into the bar complex $\B$ via $\iota:\K\rightarrow\B$. There is a map $\iota\ot\iota:\K\ot\K\rightarrow\B\ot\B$ such that $(\iota\ot\iota)(\K\ot\K) = \iota(\K)\ot\iota(\K)\subseteq \B\ot \B$. The following diagram is commutative.
\begin{align*}
&\mathbb{K} \stackrel{\Delta_{\mathbb{K}}}\longrightarrow \mathbb{K}\otimes_{\Lambda}\mathbb{K}\\
&\iota\downarrow \qquad\;\;\downarrow\iota\otimes\iota\\
&\mathbb{B} \stackrel{\Delta_{\mathbb{B}}}\longrightarrow \mathbb{B}\otimes_{\Lambda}\mathbb{B}
\end{align*}
that is $(\iota\otimes\iota)\circ\Delta_{\mathbb{K}} = \Delta_{\mathbb{B}}\circ\iota.$ We recall that if $\alpha\in \HHom_{\Lambda^e}(\mathbb{B}_m,\Lambda)\cong \HHom_k(\Lambda^{\otimes m}, \Lambda) $, and $\beta\in \HHom_k(\Lambda^{\otimes n}, \Lambda)$ are two cocycles, then one way to define the cup product is the composition of the following maps;
$$\alpha\smallsmile \beta: \mathbb{B}\stackrel{\Delta}\longrightarrow
\mathbb{B}\otimes_{\Lambda}\mathbb{B}\stackrel{\alpha\otimes \beta}\longrightarrow \Lambda\otimes_{\Lambda}\Lambda \stackrel{\pi}\simeq \Lambda$$
where $(\alpha\otimes \beta)(x\otimes y) = (-1)^{|\beta||x|}\alpha(x)\otimes \beta(y),$ and we take $|\beta|=n$ since it is an $n$-cocycle. This convention matches the traditional definition of the cup product on the bar resolution given as
$$(\alpha\smallsmile\beta)(a_1\otimes\cdots \otimes a_{m+n}) = (-1)^{mn} \alpha (a_1\otimes\cdots \otimes a_{m})\cdot \beta(a_{m+1}\otimes\cdots \otimes a_{m+n}), \qquad \forall \;a_i\in\Lambda$$
This definition extends to the resolution $\K$ using a similar composition map but replacing the $\Delta_{\mathbb{B}}$, that is
$$\phi\smallsmile \mu: \mathbb{K}\stackrel{\Delta_{\K}}\longrightarrow
\mathbb{K}\otimes_{\Lambda}\mathbb{K}\stackrel{\phi\ot\mu}\longrightarrow \Lambda\otimes_{\Lambda}\Lambda \stackrel{\pi}\simeq \Lambda$$
so that $\phi\smallsmile \mu = \pi(\phi\ot\mu)\Delta_{\K}$. Let $\phi: \K_m\rightarrow \Lambda$, and $\mu: \K_n\rightarrow \Lambda$ be two cocycles of homological degrees $m$ and $n$ respectively, we use the following notation $\phi\smallsmile\mu = (\phi^m_0,\phi^m_1,\cdots,\phi^m_m,\phi^m_{m+1})\smallsmile(\mu^n_0,\mu^n_1,\mu^n_2,\cdots,\mu^n_n,\mu^n_{n+1}) = ((\phi\mu)^{m+n}_0,(\phi\mu)^{m+n}_1,(\phi\mu)^{m+n}_2,\cdots,(\phi\mu)^{m+n}_{m+n},(\phi\mu)^{m+n}_{m+n+1})$ for their cup product.

%%%%%%%%%%%%%%%%%%PROPOSITION ABOUT CUP PRODUCT
\begin{prop}\label{prop0}
Let $\phi: \K_m\rightarrow \Lambda$, and $\mu: \K_n\rightarrow \Lambda$, be two Hochschild cocycles. Then the following gives
a formula for their cup product.\\
\\
$(\phi\smallsmile\mu)(\varepsilon^{m+n}_k) = (\phi\mu)^{m+n}_k = \begin{cases}(-1)^{mn} \phi^m_0\mu^n_0, & when\; k=0\\
(-1)^{mn} T^{m+n}_k & when\; 0<k<m+n\\
 (-1)^{mn}\phi^m_m\mu^n_n, & when\; k=m+n\\
 (-1)^{mn}\phi^m_0\mu^n_{n+1}, & when\;k=m+n+1
\end{cases}$
$$T^{m+n}_k = \sum_{j=max\{0,k-n\}}^{min\{m,k\}} (-q)^{j(n-k+j)}\phi^m_j\mu^n_{k-j}, \quad\qquad 0<k<m+n.$$
\end{prop}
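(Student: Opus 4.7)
The strategy is to transport the computation from $\K$ to the bar complex $\B$ via the embedding $\iota$, exploiting the commutative square $(\iota\ot\iota)\circ\Delta_{\K}=\Delta_{\B}\circ\iota$. Because $\phi\ot\mu$ vanishes outside the $\K_{m}\ot_{\Lambda}\K_{n}$ summand of $\K\ot_{\Lambda}\K$, it is enough to extract the bidegree $(m,n)$ piece of $\Delta_{\B}\iota(\varepsilon^{m+n}_{k})=\Delta_{\B}(1\ot f^{m+n}_{k}\ot 1)$ and to recognise it as $(\iota\ot\iota)$ applied to a specific element of $\K_{m}\ot_{\Lambda}\K_{n}$. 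On the bar side, the Alexander--Whitney coproduct performs exactly such a split at a prescribed position, so the whole problem reduces to writing each monomial of $f^{m+n}_{k}$ as a prefix of length $m$ tensored with a suffix of length $n$ and tracking the $q$-signs.

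The first ingredient is an explicit monomial expansion of $f^{n}_{i}$. Iterating the recursion~\eqref{gammas} by induction on $n$ gives, for $0\le i\le n$,
\[
f^{n}_{i}=\sum_{w}(-q)^{\mathrm{inv}(w)}\,w,
\]
where $w$ ranges over tensor words of length $n$ in the letters $a,b$ with exactly $i$ copies of $b$, and $\mathrm{inv}(w)$ denotes the number of position pairs $p<r$ with $w_{p}=b$ and $w_{r}=a$. The inductive step is transparent: appending $b$ at the right introduces no new inversions, while appending $a$ introduces exactly $i$ of them, which accounts for the factor $(-q)^{i}$ in the recursion.

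The second ingredient is a combinatorial splitting identity. Writing $w=w'\ot w''$ with $|w'|=m$ and letting $j$ be the number of $b$'s in $w'$, the suffix $w''$ contains $k-j$ copies of $b$ and $n-k+j$ copies of $a$. Partitioning the inversions of $w$ according to whether both, neither, or exactly one of the positions lies in $w'$ yields
\[
\mathrm{inv}(w)=\mathrm{inv}(w')+\mathrm{inv}(w'')+j(n-k+j),
\]
the cross term counting pairs $(p,r)$ with $p\le m<r$. Inserting this into the Alexander--Whitney split at position $m$ of $\Delta_{\B}(1\ot f^{m+n}_{k}\ot 1)$ and grouping monomials by $j$, the inner double sum factors into the expansions of $f^{m}_{j}$ and $f^{n}_{k-j}$; hence the $(m,n)$ piece of $\Delta_{\K}(\varepsilon^{m+n}_{k})$ is
\[
\sum_{j=\max\{0,k-n\}}^{\min\{m,k\}}(-q)^{j(n-k+j)}\,\varepsilon^{m}_{j}\ot\varepsilon^{n}_{k-j}.
\]

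Finally, applying $\pi(\phi\ot\mu)$ with the Koszul sign convention $(\phi\ot\mu)(x\ot y)=(-1)^{mn}\phi(x)\mu(y)$ for $x\in\K_{m}$ produces the formula $T^{m+n}_{k}$ for $0<k<m+n$; the endpoints $k=0$ and $k=m+n$ correspond to the unique surviving summands $j=0$ and $j=m$, each with coefficient $1$. For $k=m+n+1$ one instead substitutes $f^{m+n}_{m+n+1}=a^{\ot(m+n-1)}\ot c$: its split at position $m$ has prefix $a^{\ot m}=f^{m}_{0}$ and suffix $a^{\ot(n-1)}\ot c=f^{n}_{n+1}$, producing $(-1)^{mn}\phi^{m}_{0}\mu^{n}_{n+1}$. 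The main technical obstacle is verifying the inversion-splitting identity and carrying through the quantum signs cleanly; once this is in place the remainder is bookkeeping.
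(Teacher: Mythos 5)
Your proposal is correct, and its global skeleton coincides with the paper's proof: both transport the computation through the embedding $\iota$ via $(\iota\ot\iota)\circ\Delta_{\K}=\Delta_{\B}\circ\iota$, extract the bidegree-$(m,n)$ component of the bar-side (Alexander--Whitney) coproduct of $1\ot f^{m+n}_k\ot 1$ (legitimate, as you note, because $\phi\ot\mu$ kills all other components), and treat $k=m+n+1$ by splitting $f^{m+n}_{m+n+1}=a^{\ot(m+n-1)}\ot c$ into $f^m_0\ot f^n_{n+1}$. The one genuine difference lies in the key splitting identity $f^{m+n}_k=\sum_{j}(-q)^{j(n-k+j)}f^m_j\ot f^n_{k-j}$: the paper does not prove it but imports it from \cite{FHFG} (specializing $t=m$), whereas you derive it from scratch, first establishing the closed monomial expansion $f^n_i=\sum_w(-q)^{\mathrm{inv}(w)}w$ by induction on the recursion~\eqref{gammas} (the $a$-branch appends exactly $i$ new inversions, matching the factor $(-q)^i$; the $b$-branch appends none), then proving the inversion-splitting identity $\mathrm{inv}(w)=\mathrm{inv}(w')+\mathrm{inv}(w'')+j(n-k+j)$, whose cross term correctly counts pairs with the $b$ in the length-$m$ prefix and the $a$ in the suffix; grouping by $j$ then factors the double sum into the expansions of $f^m_j$ and $f^n_{k-j}$ with the stated range $\max\{0,k-n\}\leq j\leq\min\{m,k\}$. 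What your route buys is self-containedness and a transparent combinatorial explanation of the quantum coefficient; what the paper's route buys is brevity (plus explicit sanity checks at $m=n=1$ and $m=1,n=2$, which your general argument subsumes). Two minor contrasts: you recover $k=0$ and $k=m+n$ as the unique surviving summands $j=0$ and $j=m$ of $T^{m+n}_k$, slightly cleaner than the paper's separate computations of $\Delta_{\K}(\varepsilon^{m+n}_0)$, $\Delta_{\K}(\varepsilon^{m+n}_{m+n})$ and $\Delta_{\K}(\varepsilon^{m+n}_{m+n+1})$; and, like the paper, you implicitly strip off $\iota\ot\iota$ after matching coefficients, which is justified by injectivity of the embedding but deserves one explicit sentence --- the same (harmless) step the paper takes.
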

\begin{proof}
Suppose $m=n=1$, take $\phi = (\phi^1_0,\phi^1_1,\phi^1_2) = (a,b,c) = (f^1_0, f^1_1, f^1_2)$ and $\mu = (\mu^1_0,\mu^1_1,\mu^1_2)=(a,b,c).$ We then realize that
$\phi\smallsmile\mu = (\phi^1_0,\phi^1_1,\phi^1_2)\smallsmile(\mu^1_0,\mu^1_1,\mu^1_2) = ( \phi^1_0\mu^1_0,\phi^1_0\mu^1_1 - q\phi^1_1\mu^1_0,\phi^1_1\mu^1_1 ,\phi^1_0\mu^1_2) = 
(a^2, ab-qba, b^2, ac) = (f^2_0, f^2_1, f^2_2, f^2_3)$ after applying $\pi(\phi\otimes \mu)\Delta_{\K}$. This consists of paths and linear combination of paths of length 2, that is, elements of $\Gamma^2$ given in~\eqref{gammas}. Similarly, if we take $m=1, n=2$, that is, take $\phi = (\phi^1_0,\phi^1_1,\phi^1_2) = (a,b,c)$ and $\mu = (\mu^2_0,\mu^2_1,\mu^2_2, \mu^2_3) = (a^2, ab-qba, b^2, ac)$. We obtain after applying  $\pi(\phi\otimes \mu)\Delta_{\K}$, 
$\phi\smallsmile\mu = (\phi^1_0\mu^2_0,\phi^1_0\mu^2_1 +(-q)^2 \phi^1_1\mu^2_0,\phi^1_0\mu^2_2 +(-q)^1 \phi^1_1\mu^2_1,\phi^1_1\mu^2_2 ,\phi^1_0\mu^2_3)
=(a^3, a^2b-qaba +q^2ba^2, ab^2-qbab+q^2b^2a, b^3, a^2c) = ( f^3_0, f^3_1, f^3_2, f^3_3, f^3_4) $ which are elements of  $\Gamma^3$ in ~\eqref{gammas}. It is enough to find a general formula for each element $f^{m+n}_r\in \Gamma^{m+n}$, so that $\iota(\varepsilon^{m+n}_r) = 1\ot f^{m+n}_r\ot 1.$ We will then find the image of $\varepsilon^{m+n}_r$ under $\Delta_{\K}$. Since $\phi\smallsmile\mu: \K_{m+n}\rightarrow \Lambda$, we simply evaluate
\begin{align*}
(\phi\smallsmile\mu)(\varepsilon^{m+n}_r) &= \pi (\phi\otimes \mu)\Delta_{\K} (\varepsilon^{m+n}_r), \;\text{ for } 0\leq r\leq m+n+1.
\end{align*}
Using the fact that 
\begin{align*}
 (\iota\otimes \iota) \Delta_{\K}(\varepsilon^{m+n}_{0}) &= \Delta_{\B}\iota (\varepsilon^{m+n}_{0}) \\
 &= \Delta_{\B}(1\otimes f^{m+n}_{0}\otimes 1) =  \stackrel{m+n\; times}{\Delta_{\B}(1\otimes f^1_0\otimes f^1_0\otimes \cdots \otimes f^1_0\otimes 1)}   \\
 & = \sum_{r=0}^{m+n} (1\otimes f^{r}_{0}\otimes 1)\otimes (1\otimes f^{m+n-r}_{0}\otimes 1)\\
 & = (\iota\otimes \iota)(\sum_{r=0}^{m+n} \varepsilon^{r}_{0}\otimes \varepsilon^{m+n-r}_{0}), \;\text{so that}\\
  \Delta_{\K}(\varepsilon^{m+n}_{0}) & = (\sum_{r=0}^{m+n} \varepsilon^{r}_{0}\otimes \varepsilon^{m+n-r}_{0}),\;\;\text{similarly}
  \end{align*}
  \begin{align*}
 \Delta_{\K}(\varepsilon^{m+n}_{m+n}) & = \sum_{r=0}^{m+n} \varepsilon^{r}_{r}\otimes \varepsilon^{m+n-r}_{m+n-r}\;\text{ and} \\
    \Delta_{\K}(\varepsilon^{m+n}_{m+n+1}) & = \sum_{r=0}^{m+n} \varepsilon^{r}_{0}\otimes \varepsilon^{m+n-r}_{m+n-r+1}
\end{align*}
We therefore obtain for $r=0, m+n, m+n+1$;\\
 $\pi (\phi\otimes \mu)\Delta_{\K} (\varepsilon^{m+n}_0) = \pi (\phi\otimes \mu) (\varepsilon^{m}_{0}\otimes \varepsilon^{n}_{0}) = (-1)^{mn}\pi (\phi^m_0\otimes \mu^n_0) = (-1)^{mn} \phi^m_0 \mu^n_0$.\\
$\pi (\phi\otimes \mu)\Delta_{\K} (\varepsilon^{m+n}_{m+n}) = \pi (\phi\otimes \mu) (\varepsilon^{m}_{m}\otimes \varepsilon^{n}_{n}) = (-1)^{mn}\pi (\phi^m_m\otimes \mu^n_n) =  (-1)^{mn}\phi^m_m \mu^n_n$.\\
$\pi (\phi\otimes \mu)\Delta_{\K} (\varepsilon^{m+n}_{m+n+1}) =  \pi (\phi\otimes \mu) (\varepsilon^{m}_{0}\otimes \varepsilon^{n}_{n+1}) =(-1)^{mn} \pi (\phi^m_0\otimes \mu^n_{n+1}) = (-1)^{mn} \phi^m_0 \mu^n_{n+1}$.\\
It was shown in ~\cite{FHFG} that for $r=1,2,\cdots,n-1,$
$$f^n_r = \sum_{j=max\{0,r+t-n\}}^{min\{t,r\}} (-q)^{j(n-r+j-t)} f^t_j\ot f^{n-t}_{r-j},$$
therefore
\begin{align*}
\iota(\varepsilon^{m+n}_r) &= 1\ot \Big{[}\sum_{j=max\{0,r+t-m-n\}}^{min\{t,r\}} (-q)^{j(m+n-r+j-t)} f^t_j\ot f^{m+n-t}_{r-j}\Big{]} \ot 1 \\
&\text{letting  } t=m\\
& = \sum_{j=max\{0,r-n\}}^{min\{m,r\}} (-q)^{j(n-r+j)} 1\ot f^m_j\ot f^{n}_{r-j}\ot 1 \\
\text{applying the diagonal map }\Delta_{\B} &\text{ and retaining the part that is nonzero when you apply }\phi\ot\mu
\end{align*}
\begin{align*}
(\Delta_{\B}\iota)(\varepsilon^{m+n}_r) &= \cdots + \sum_{j=max\{0,r-n\}}^{min\{m,r\}} (-q)^{j(n-r+j)} (1\ot f^m_j\ot 1) \ot (1\ot f^{n}_{r-j}\ot 1) +\cdots \\
& = \cdots +\sum_{j=max\{0,r-n\}}^{min\{m,r\}} (-q)^{j(n-r+j)}(\iota\ot\iota)( \varepsilon^m_j \ot \varepsilon^n_{r-j}) +\cdots \\
& \text{ using the relation that } (\iota\otimes\iota)\circ\Delta_{\mathbb{K}} = \Delta_{\B}\circ\iota \\
(\iota\ot\iota)\Delta_{\K}(\varepsilon^{m+n}_r) & = (\iota\ot\iota)\bigg[\cdots +\sum_{j=max\{0,r-n\}}^{min\{m,r\}} (-q)^{j(n-r+j)}( \varepsilon^m_j \ot \varepsilon^n_{r-j}) +\cdots \bigg]
\end{align*}
\begin{align*}
\Delta_{\mathbb{K}}(\varepsilon^{m+n}_r) &=\cdots +\sum_{j=max\{0,r-n\}}^{min\{m,r\}} (-q)^{j(n-r+j)} \varepsilon^m_j \ot \varepsilon^n_{r-j} +\cdots \\
&\text{ therefore  we obtain after applying } \phi\ot\mu\ \text{ and multiplication }\pi \\
(\phi\smallsmile\mu)(\varepsilon^{m+n}_{r}) &= (-1)^{mn}\sum_{j=max\{0,r-n\}}^{min\{m,r\}} (-q)^{j(n-r+j)} \phi(\varepsilon^m_j) \mu(\varepsilon^n_{r-j}) \\
&=(-1)^{mn}\sum_{j=max\{0,r-n\}}^{min\{m,r\}} (-q)^{j(n-r+j)} \phi^m_j\mu^n_{r-j} \\
&= (-1)^{mn}T^{m+n}_r 
\end{align*}
\end{proof}
\begin{rema}\label{bigrem1}
We give the following as a support to our previous Remark~\ref{bigrem0}. From Proposition~\ref{prop}, we observed that $ker d^*_{n+1}$ is generated by $\phi$ such that $\phi(\varepsilon^m_i) = \phi^m_i\in span_k\{a,b,ab,c,bc,e_1\}$.
But any $\phi$ having any of its $\phi^m_i$ to be any of $a,b,ab,c,bc$ is nilpotent. This is because
\begin{align}
(\phi\smallsmile\phi)(\varepsilon^{m+n}_{r})
&=(-1)^{mn}\sum_{j=max\{0,r-n\}}^{min\{m,r\}} (-q)^{j(n-r+j)} \phi^m_j\phi^n_{r-j} 
\end{align}
where $\phi^m_j\phi^n_{r-j}$ is a product of any two elements in the set $\{a,b,ab,c,bc\}$ which is equal to 0 in the algebra. If it is not zero,
we simply take a triple cup product using the following;
\begin{align*}
(\phi\smallsmile\phi\smallsmile\phi)(\varepsilon^{n+n+n}_{r}) &= (\mu\smallsmile\phi)(\varepsilon^{m+n}_{r}) \quad(\text{take  }\mu = \phi\smallsmile\phi, m=n+n)\\
&=(-1)^{mn}\sum_{j=max\{0,r-n\}}^{min\{m,r\}} (-q)^{j(n-r+j)} \mu(\varepsilon^m_j)\phi(\varepsilon^n_{r-j}) \\
&=(-1)^{mn}\sum_{j=max\{0,r-n\}}^{min\{m,r\}} (-q)^{j(n-r+j)} [\phi\smallsmile\phi(\varepsilon^{n+n}_j)]\phi(\varepsilon^n_{r-j}) \\
&=(-1)^{mn}\sum_{j=max\{0,r-n\}}^{min\{m,r\}} (-q)^{j(n-r+j)} \Big[(-1)^{n^2}\sum_{i=max\{0,l-n\}}^{min\{n,l\}} (-q)^{i(n-l+i)} \phi(\varepsilon^n_i)\phi(\varepsilon^n_{l-i}) \Big]\phi(\varepsilon^n_{r-j}) \\
&=(-1)^{3n^2}\sum_{j=max\{0,r-n\}}^{min\{m,r\}} \sum_{i=max\{0,l-n\}}^{min\{n,l\}}(-q)^{ij(n-r+j)(n-l+i)} \phi(\varepsilon^n_i)\phi(\varepsilon^n_{l-i})\phi(\varepsilon^n_{r-j}) 
\end{align*}
The product $\phi(\varepsilon^n_i)\phi(\varepsilon^n_{l-i})\phi(\varepsilon^n_{r-j}) = \phi^n_i\phi^n_{l-i}\phi^n_{r-j}$ is always 0 in $\Lambda_q$ except each $\phi^n_i=e_1.$ Therefore a cocycle $\phi\in HH^m(\Lambda)$ is non-nilpotent if and only if
$\phi^m_i =\phi^m_{l-i} =\phi^m_{r-j} = e_1$ for some $i,j,l,r$. According to Proposition~\ref{prop} this is the case only when $q=\pm 1, n$ is even and $i$ is even.
\end{rema}\ 
The following Proposition summarizes Remarks~\ref{bigrem0} and  ~\ref{bigrem1}.
\begin{prop}\label{prop1}
Let $\phi:\K_n\rightarrow \Lambda_q,$ be a cocycle. Then $\phi$ is non-nilpotent if, and only if $q=\pm 1$ and
$$\phi(\varepsilon^{n}_r) = \phi^{n}_r = \begin{cases} e_1, &\text{if $n,\;r$ are even} \\ 0 & \text{ otherwise. }\end{cases}$$
\end{prop}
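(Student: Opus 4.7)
The plan is to combine the classification of allowable components $\phi^n_r$ given in Proposition~\ref{prop} with the cup product formula of Proposition~\ref{prop0}, exploiting the fact that most triple products of generators vanish in $\Lambda_q$. Throughout, I will regard the components $\phi^n_r$ of a basis cocycle as drawn from the list $\{a, b, ab, c, bc, e_1\}$ produced by Proposition~\ref{prop}.

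First I would record the algebraic observation that any triple product of elements drawn from $\{a, b, ab, c, bc\}$ vanishes in $\Lambda_q$: this is immediate from $a^2 = b^2 = ac = 0$ and $ab = qba$, since every length-three monomial in these symbols contains a forbidden subword after one reordering by $ab = qba$. Via Remark~\ref{bigrem1} and Proposition~\ref{prop0}, this forces $\phi \smallsmile \phi \smallsmile \phi = 0$ as a chain map whenever every component of $\phi$ lies in that five-element set, so a non-nilpotent cocycle must have at least one component equal to $e_1$. It then suffices to inspect the three tables in the proof of Proposition~\ref{prop} to see when $e_1$ is permitted: the value $e_1$ appears only in the $q = +1$ and $q = -1$ tables, with $n$ even and $r$ even; it is forbidden for $q \neq \pm 1$, for $n$ odd, and for $r$ odd. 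This gives the forward implication and pins down the claimed form.

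For the converse, fix $q = \pm 1$, take $n$ even, and set $\phi^n_r = e_1$ for even $r$ and $0$ otherwise. Proposition~\ref{prop} certifies that this $\phi$ is a cocycle. Applying Proposition~\ref{prop0} at $k = 0$ gives $(\phi \smallsmile \phi)^{2n}_0 = (-1)^{n^2} \phi^n_0 \phi^n_0 = e_1$ (using that $n$ is even), and an easy induction on the number of cup-product factors yields $(\phi^{\smallsmile m})^{mn}_0 = e_1 \neq 0$ for every $m \geq 1$. To promote this chain-level non-vanishing to non-nilpotence in $\HH^{*}(\Lambda_q)$, I would note that the zeroth component of any element of $\Img d^*_{mn}$ has the form $a\psi^{mn-1}_0 \pm \psi^{mn-1}_0 a$, which lies in $a\Lambda_q + \Lambda_q a$ and hence cannot equal the idempotent $e_1$.

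The main obstacle is this last step, separating chain-level non-nilpotence from non-nilpotence of the cohomology class; the argument rests on a direct inspection of $d^*_{mn}$ at the zeroth position to rule out coboundaries hitting $e_1$. The rest is bookkeeping: matching the parities dictated by Proposition~\ref{prop} against the indices that appear in the cup product sum $T^{m+n}_k$ of Proposition~\ref{prop0}, and observing that the multiplicative collapse in $\Lambda_q$ annihilates every term that does not consist entirely of idempotent factors.
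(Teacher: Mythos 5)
Your forward direction is essentially the paper's own argument (Remark~\ref{bigrem1} combined with the tables of Proposition~\ref{prop}): triple products of elements of $\{a,b,ab,c,bc\}$ vanish in $\Lambda_q$, so by the cup product formula a non-nilpotent cocycle must involve $e_1$, and the tables admit $e_1$ only when $q=\pm 1$ with $n$ even and the position even. That half is sound and matches the paper.

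The converse, however, contains a genuine gap. You set $\phi^n_0=e_1$ (reading ``$r$ even'' as including $r=0$) and certify non-nilpotence by computing $(\phi^{\smallsmile m})^{mn}_0=e_1$. But no cocycle has $\phi^n_0=e_1$: Proposition~\ref{prop} forces $\phi^n_0\in span_k\{a,ab\}$, precisely because the cocycle condition $a\phi^n_{n+1}+(-1)^{n+1}\phi^n_0c=0$ fails for $\phi^n_0=e_1$ (here $a\phi^n_{n+1}\in span_k\{ac,abc\}=0$ in $\Lambda_q$, while $e_1c=c\neq 0$). Indeed your own table inspection in the forward direction lists $e_1$ only in the interior row $\phi^n_r$, never in the $\phi^n_0$ row, and the paper's Proposition~\ref{prop2} explicitly computes $(\phi\smallsmile\mu)(\varepsilon^{2(m+n)}_0)=\phi^{2m}_0\mu^{2n}_0=0$. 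For an actual cocycle of the stated form the zeroth component is $0$, so your certificate $(\phi^{\smallsmile m})^{mn}_0$ vanishes identically, and both your induction and your coboundary check at the zeroth position establish nothing. The repair is to work at an interior even position, which is what the paper does: place $e_1$ at position $r\geq 2$ ($r$ even); the sum $T^{m+n}_k$ at $k=r+s$ shows that cupping $e_1$ at position $r$ with $e_1$ at position $s$ produces $e_1$ at position $r+s$ with coefficient $(-q)^{j(n-k+j)}=+1$, since the exponent is even and $q=\pm 1$, so $\phi^{\smallsmile m}$ has component $e_1$ at position $mr$ for all $m$. To rule out coboundaries one then uses the paper's observation preceding Proposition~\ref{prop2}: every component of $d^*\alpha$ has the form $a\alpha(\cdot)\pm q^{\bullet}\alpha(\cdot)a\pm q^{\bullet}b\alpha(\cdot)\pm\alpha(\cdot)b$, which lies in the radical of $\Lambda_q$ and hence can never equal the idempotent $e_1$. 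Your radical-membership idea at position $0$ is exactly the right mechanism, merely applied at the wrong index.
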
\ \\
Let $C^n(\Lambda,\Lambda) = \HHom_{\Lambda^e}(\K_n,\Lambda)$ be the $\Lambda^e$-module generated by all $n$-Hochschild cochains. Denote by $C^{*}(\Lambda,\Lambda) = \bigoplus_{n\geq 0} \HHom_{\Lambda^e}(\K_n,\Lambda)$ the algebra of Hochschild cochains with coefficients in $\Lambda$. The $n$-cocycles of $\HH^*(\Lambda_{\pm 1})$ given in Proposition~\ref{prop1} are therefore given by $Z^{*}(\Lambda,\Lambda) = C^{*}(\Lambda,\Lambda)/\mathcal{N}.$

Before we state the next proposition, we will show that no two cocycles of Proposition~\ref{prop1} differ by a coboundary. This is important as we will later define a 1-1 module homomorphism from $Z^{*}(\Lambda,\Lambda)$ to the polynomial ring $k[x,y]$.

For a fixed $n$, let $\phi,\beta$ be two cocycles such that $\phi(\varepsilon^{2n}_r) = \phi^{2n}_r = e_1$, $\beta(\varepsilon^{2n}_s) = \beta^{2n}_s = e_1$, where $r<s$ are both even and there is $\alpha$ such that $d^{*}(\alpha) = \phi-\beta =(0,\cdots,0,e_1,0,\cdots,0,e_1,0,\cdots,0),$ the idempotents $e_1$ is in the $r$ and $s$ position. We have the following when $i=0$,
\begin{align*}
d^*(\alpha)(\varepsilon^{2n}_{i}) &= \alpha d(\varepsilon^{2n}_{i}), \\
 0 &= \alpha(a\varepsilon^{2n-1}_0 + (-1)^n\varepsilon^{2n-1}_0a) =  a\alpha(\varepsilon^{2n-1}_0) + (-1)^2n\alpha(\varepsilon^{2n-1}_0)a\\
\text{hence,  }\quad  &\alpha(\varepsilon^{2n-1}_0) = 0
\end{align*}
and in general, we must have 
\begin{align*}
e_1 = d^*(\alpha)(\varepsilon^{2n}_{r}) &=  a\alpha(\varepsilon^{2n-1}_r) + (-1)^{2n-r}q^r \alpha(\varepsilon^{2n-1}_r)a + 
(-q)^{2n-r}b\alpha(\varepsilon^{2n-1}_{r-1}) + (-1)^{2n} \alpha(\varepsilon^{2n-1}_{r-1})b\\
\text{ and }\\
e_1 = d^*(\alpha)(\varepsilon^{2n}_{s}) &=  a\alpha(\varepsilon^{2n-1}_s) + (-1)^{2n-s}q^s \alpha(\varepsilon^{2n-1}_s)a + 
(-q)^{2n-s}b\alpha(\varepsilon^{2n-1}_{s-1}) + (-1)^{2n} \alpha(\varepsilon^{2n-1}_{s-1})b.
\end{align*}

There is no way we can define $\alpha(\varepsilon^{2n-1}_r)$ and  $\alpha(\varepsilon^{2n-1}_s)$ so that the equation above is true i.e., the right hand side equals $e_1$. Hence there is no such $\alpha$. Therefore each cocycle is distinct and do not differ by a coboundary. We now make the following Proposition. 
\begin{prop}\label{prop2}
$Z^{*}(\Lambda,\Lambda)$ is graded with respect to the cup product and can be expressed as a subalgebra of $k[x,y]$, that is
$$Z^{*}(\Lambda,\Lambda) \cong k[x^2,y^2]y^2$$
where the degree of $y$ is 1, and $xy$ is 2.
\end{prop}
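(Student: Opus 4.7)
The plan is to combine the structure theorem of Proposition~\ref{prop1} with the cup-product formula of Proposition~\ref{prop0} to exhibit an explicit basis of the positive-degree part of $Z^{*}(\Lambda,\Lambda)$ and to match it with the monomial basis of $k[x^{2},y^{2}]y^{2}$. Write $\phi^{(2m,r)}$ for the representative cocycle in homological degree $2m$ with $\phi^{2m}_{r}=e_{1}$ and $0$ in every other position, where $r$ is even. Inspection of the ``top'' cocycle relation $d^{*}_{2m+1}\phi(\varepsilon^{2m+1}_{2m+2}) = a\phi^{2m}_{2m+1} - \phi^{2m}_{0}c$ forces $\phi^{2m}_{0}=0$: otherwise one would need $a\phi^{2m}_{2m+1}=c$, which is impossible since $e_{1}\Lambda e_{2}=\mathrm{span}_{k}\{c,bc\}$ and $ac=abc=0$ in $\Lambda$. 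Hence the admissible positions are exactly $r\in\{2,4,\ldots,2m\}$, yielding $m$ linearly independent classes in each $Z^{2m}(\Lambda,\Lambda)$ by the coboundary argument stated just before the proposition.

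I then plug these cocycles into Proposition~\ref{prop0}. In the sum $T^{2m+2n}_{k}$ the product $\phi^{(2m,r)}(\varepsilon^{2m}_{j})\phi^{(2n,s)}(\varepsilon^{2n}_{k-j})$ is nonzero only when $j=r$ and $k-j=s$, so $k=r+s$; the accompanying scalar is $(-q)^{r(2n-s)}=1$, since $r$ and $2n-s$ are both even (so their product is divisible by $4$) and $q=\pm 1$. The three special cases in Proposition~\ref{prop0} (namely $k=0$, $k=m+n=2m+2n$, and $k=m+n+1$) are consistent with or subsumed by this formula. This yields the clean multiplication rule
\begin{equation*}
\phi^{(2m,r)}\smallsmile\phi^{(2n,s)} \;=\; \phi^{(2(m+n),\,r+s)}.
\end{equation*}

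Finally, I define $\Psi: Z^{*}(\Lambda,\Lambda)\to k[x^{2},y^{2}]y^{2}$ by $\Psi(\phi^{(2m,r)})=x^{2m-r}y^{r}$. As $r$ runs over $\{2,4,\ldots,2m\}$, the images $\{x^{2(m-k)}y^{2k}:1\leq k\leq m\}$ exhaust the degree-$2m$ monomial basis of $k[x^{2},y^{2}]y^{2}$ (recall $|y|=1$ and $|xy|=2$, so $|x^{2i}y^{2j+2}|=2(i+j+1)$). Thus $\Psi$ is a graded $k$-linear bijection, and the multiplication rule above makes it a $k$-algebra isomorphism, since $\Psi(\phi^{(2m,r)})\,\Psi(\phi^{(2n,s)}) = x^{(2m-r)+(2n-s)}y^{r+s} = \Psi(\phi^{(2(m+n),r+s)})$. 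The main obstacle is really the bookkeeping in the first step: one must verify carefully that the $c$-relation excludes precisely $r=0$ while every other even $r$ is admissible, and must appeal to the preceding coboundary argument to conclude simultaneous linear independence of the $\phi^{(2m,r)}$ across all admissible $(m,r)$.
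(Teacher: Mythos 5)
Your proposal is correct and follows essentially the same route as the paper's proof: both take the single-$e_1$ even-position cocycles $\phi^{(2m,r)}$, $r\in\{2,4,\ldots,2m\}$, as a basis (relying on Proposition~\ref{prop1} and the coboundary argument preceding the proposition), compute their cup products via Proposition~\ref{prop0} to get $\phi^{(2m,r)}\smallsmile\phi^{(2n,s)}=\phi^{(2(m+n),\,r+s)}$ with trivial scalar since $q=\pm 1$ and the exponents are even, and then map $\phi^{(2m,r)}\mapsto x^{2m-r}y^{r}$ onto the monomial basis of $k[x^{2},y^{2}]y^{2}$. Your explicit exclusion of $r=0$ via the relation $a\phi^{2m}_{2m+1}-\phi^{2m}_{0}c=0$ is a detail the paper instead reads off from the solution tables of Proposition~\ref{prop} (where $\phi^{n}_{0}\in\mathrm{span}_{k}\{a,ab\}$), but the substance of the argument is the same.
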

\begin{proof}
We first show that $Z^{*}(\Lambda,\Lambda)$ can be expressed as \\ $\bigoplus_{n>0} span_k\Big\{ \phi:\K_{2n}\rightarrow\Lambda_{\pm 1} \Big{|}  \phi^{n}_r = \begin{cases} e_1, &\text{if $r$ is even} \\ 0 & \text{ otherwise }\end{cases}\Big\}.$ It is straightforward to see that if $\phi\in Z^{*}(\Lambda,\Lambda)$, there are pairs of positive even integers $m,i$ such that $\phi: \K_{m}\rightarrow \Lambda_{\pm 1}$ and $\phi(\varepsilon^m_i) = e_1$ and $0$ at other positions not equal to $i$. Hence 
\begin{align*}
Z^{*}(\Lambda,\Lambda)&\subseteq span_k\Big\{ \phi:\K_{m}\rightarrow\Lambda_{\pm 1} \Big{|}  \phi^{m}_i = \begin{cases} e_1, &\text{if $i$ is even} \\ 0 & \text{ otherwise }\end{cases}\Big\}\\
&\subseteq \bigoplus_{n>0} span_k\Big\{ \phi:\K_{2n}\rightarrow\Lambda_{\pm 1} \Big{|}  \phi^{n}_r = \begin{cases} e_1, &\text{if $r$ is even} \\ 0 & \text{ otherwise }\end{cases}\Big\}
\end{align*}
The grading comes from the fact that its elements are Hochschild cocycles and all odd degree elements vanish. We will now show that $\bigoplus_{n>0} span_k\big\{ \phi:\K_{2n}\rightarrow\Lambda_{\pm 1} \Big{|}  \phi^{n}_r = e_1, \text{for some $r$}\big\} $
is graded with respect to the cup product, hence contained in $Z^{*}(\Lambda,\Lambda)$.\\
Let $\phi:\K_{2m}\rightarrow \Lambda_q,$ be given by 
$ \phi^{2m}_r = \begin{cases} e_1, &\text{if $r$ is even} \\ 0 & \text{ otherwise }\end{cases},$ and \\
$\mu:\K_{2n}\rightarrow \Lambda_q,$ be given by 
\( \mu^{2n}_s = \begin{cases} e_1, &\text{if $s$ is even} \\ 0 & \text{ otherwise }\end{cases}.\) \\
First note that $(\phi\smallsmile\mu)(\varepsilon^{2(m+n)}_{0}) =  \phi^{2m}_{0}\mu^{2n}_{0} = 0$. Since $2m+2n$ is even, whenever $r=2m$ and $s=2n$, we get $(\phi\smallsmile\mu)(\varepsilon^{2(m+n)}_{2(m+n)}) =  \phi^{2m}_{2m}\mu^{2n}_{2s} = e_1\cdot e_1 = e_1.$ Also $(\phi\smallsmile\mu)(\varepsilon^{2(m+n)}_{2(m+n)+1}) =  \phi^{2m}_{0}\mu^{2n}_{2n+1} = 0.$ Whenever $0\leq t\leq 2(m+n)$
\begin{align*}
(\phi\smallsmile\mu)(\varepsilon^{2m+2n}_{t})
&=\sum_{j=max\{0,t-2n\}}^{min\{2m,t\}} (-q)^{j(2n-t+j)} \phi^{2m}_j\mu^{2n}_{t-j} = \pm  e_1 
\end{align*}
whenever $ t,j$ are even since $\phi^{2m}_j\mu^{2n}_{t-j} =  \begin{cases} e_1, &\text{if $t,j$ is even} \\ 0 & \text{ otherwise. }\end{cases}$ \ \\
This shows that  $(\phi\smallsmile\mu)\in span_k\Big\{ \phi:\K_{2n}\rightarrow\Lambda_q \Big{|}  \phi^{n}_r = \begin{cases} e_1, &\text{if $r$ is even} \\ 0 & \text{ otherwise }\end{cases}\Big\}.$ We define a map from $Z^{*}(\Lambda,\Lambda)\rightarrow k[x,y]$ by\\
$ (0,0,e_1,0,\cdots,0) \mapsto x^{2(n-1)}y^2,\quad (0,0,0,0,e_1,0,\cdots,0) \mapsto x^{2(n-2)}y^4,\\
\cdots, \underbrace{(0,\cdots, 0, e_1 , 0,\cdots,0)}_{r-th\;position} \mapsto x^{2n-r}y^r,\cdots,
(0,0,\cdots,0,e_1,0)\mapsto y^{2n}$. Under this map, the image of $Z^{*}(\Lambda,\Lambda)$ is the subalgebra $k[x^2,y^2]y^2$ which is not finitely generated as an algebra. Also note how the cup product corresponds with multiplication in $k[x,y]$, that is
\begin{equation*}
\xymatrix{ 
 (0,\cdots, 0, \underbrace{e_1}_{r} , 0,\cdots,0) \smallsmile(0,\cdots, 0, \underbrace{e_1}_{s} , 0,\cdots,0)  \ar@{-->}[d]_-{=} \,\,\ar[r]^-{ = } & (x^{2n-r}y^r)\cdot(x^{2m-s}y^s) \ar@{-->}[d]_{=} \\
 (0,\cdots, 0, \underbrace{e_1}_{r+s} , 0,\cdots,0) \ar[r]^-{ = } & x^{2(n+m)-(r+s)}y^{r+s} }
\end{equation*}

For each $n$, the element $x^{2(n-1)}y^2$ which we identify with $(0,0,e_1,0,\cdots,0)$ cannot be generated by any element of lower homological degree.  This brings us to conclude that the graded copies of cocycles of Proposition ~\ref{prop1} can be compactly given as \\
$\bigoplus_{n>0} span_k\big\{ \phi:\K_{2n}\rightarrow\Lambda_{\pm 1} \Big{|}  \phi^{n}_r = e_1, \text{for some $r$}\big\} = k[x^2,y^2]y^2$. 
\end{proof}\ \\
Before we give the final Theorem, we illustrate with an example.\\
\textbf{Example}
To show that
\begin{align*}
x^2y^2\cdot y^2 &= (0,0,e_1,0,0,0)\smallsmile(0,0,e_1,0)\\
&=(0,0,0,0,e_1,0,0,0) = x^2\cdot y^4
\end{align*}
Take $\phi = x^2y^2= (\phi^4_0,\phi^4_1,\phi^4_2,\phi^4_3,\phi^4_4,\phi^4_5) = (0,0,e_1,0,0,0)$ and 
$\mu =y^2= (\phi^2_0,\phi^2_1,\phi^2_2,\phi^2_3) = (0,0,e_1,0)$
\begin{align*}
(\phi\smallsmile\mu)(\varepsilon^6_0) &= \phi^4_0\mu^2_0 = 0 \\
(\phi\smallsmile\mu)(\varepsilon^6_1) &=\sum_{j=0}^{1} (-1)^{j(1+j)} \phi^4_j\mu^2_{1-j} = \phi^4_0\mu^2_1 + \phi^4_1\mu^2_0 = 0\\  
(\phi\smallsmile\mu)(\varepsilon^6_2) &=\sum_{j=0}^{2} (-1)^{j^2} \phi^4_j\mu^2_{2-j} = \phi^4_0\mu^2_2 - \phi^4_1\mu^2_1 + \phi^4_2\mu^2_0 = 0\\
(\phi\smallsmile\mu)(\varepsilon^6_3) &=\sum_{j=1}^{3} (-1)^{j(-1+j)} \phi^4_j\mu^2_{3-j} = \phi^4_1\mu^2_2 + \phi^4_2\mu^2_1 + \phi^4_3\mu^2_0 = 0\\
(\phi\smallsmile\mu)(\varepsilon^6_4) &=\sum_{j=2}^{4} (-1)^{j(-2+j)} \phi^4_j\mu^2_{4-j} = \phi^4_2\mu^2_2 - \phi^4_3\mu^2_1 + \phi^4_4\phi^2_0 = e_1\\
(\phi\smallsmile\mu)(\varepsilon^6_5) &=\sum_{j=3}^{4} (-1)^{j(-3+j)} \phi^4_j\mu^2_{5-j} = \phi^4_3\mu^2_2 + \phi^4_4\mu^2_1 = 0\\
(\phi\smallsmile\mu)(\varepsilon^6_6) &= \phi^4_4\mu^2_4 = 0\\
(\phi\smallsmile\mu)(\varepsilon^6_7) &= \phi^4_0\mu^2_3 = 0
\end{align*}
\\
\begin{theo}\label{tolwit}
Let $k$ ($char(k)\neq 2$) be a field and $\Lambda_q = \frac{kQ}{I}$ be the family of quiver algebras of ~\eqref{quivalg}, and $\mathcal{N}$ the set of nilpotents elements of $\HH^*(\Lambda_q),$  then
$$\HH^*(\Lambda_q)/\mathcal{N} = \begin{cases} \HH^0(\Lambda_q)/\mathcal{N} \cong Z(\Lambda_q)^* \cong k,  &\text{ if } q\neq \pm 1 \\
Z(\Lambda_q)^* \oplus k[x^2,y^2]y^2 \cong k \oplus k[x^2,y^2]y^2 , &\text{ if } q =\pm 1 \end{cases}$$
where the degrees of $y$ is 1, and that of $xy$ is 2.
\end{theo}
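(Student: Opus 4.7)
The plan is to assemble the theorem by analyzing $\HH^*(\Lambda_q)/\mathcal{N}$ degree by degree, using the earlier propositions. The degree-zero part is common to both cases: by equation~\eqref{center} in Remark~\ref{bigrem}, we have $\HH^0(\Lambda_q)/\mathcal{N} = \mathrm{span}_k\{(e_1,e_2)\} \cong k$ for every $q \in k$, with the nilpotent central generators $(a,0)$ and $(ab,0)$ (when present) killed by the quotient. Since $(e_1,e_2) = 1_{\Lambda_q}$, this is the copy of $k$ appearing in both cases of the statement, and it may be identified with $Z(\Lambda_q)^*/\mathcal{N}$.

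For the case $q \neq \pm 1$, Proposition~\ref{prop1} does all the work: a cocycle of positive homological degree is non-nilpotent only when $q = \pm 1$. Hence when $q \neq \pm 1$, every element of $\HH^n(\Lambda_q)$ for $n \geq 1$ is nilpotent, so $\HH^n(\Lambda_q)/\mathcal{N} = 0$ for $n \geq 1$ and the first case of the theorem follows immediately.

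For the case $q = \pm 1$, Proposition~\ref{prop1} identifies the non-nilpotent cocycles in positive degree as precisely those $\phi:\K_{2n} \to \Lambda_{\pm 1}$ with $\phi^{2n}_r = e_1$ for $r$ even and $0$ otherwise. Proposition~\ref{prop2} (together with the preceding verification that no two such cocycles differ by a coboundary) then shows that these representatives, equipped with the cup product from Proposition~\ref{prop0}, form a graded subalgebra isomorphic to $k[x^2,y^2]y^2 \subseteq k[x,y]$ via the map sending the cocycle with $e_1$ in position $r$ of $\K_{2n}$ to $x^{2n-r}y^r$. Since this subalgebra is concentrated in even positive degrees while the $\HH^0$ summand sits in degree zero, combining them yields $\HH^*(\Lambda_q)/\mathcal{N} \cong k \oplus k[x^2,y^2]y^2$ as graded vector spaces.

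The main obstacle is verifying that this decomposition respects the cup product, in particular that the identity $(e_1,e_2)$ of $\HH^0/\mathcal{N}$ acts as the identity on positive-degree non-nilpotent cocycles. This is a direct computation using Proposition~\ref{prop0} with $m=0$ and $\phi^0_0 = e_1$: the formula collapses to $((e_1,e_2) \smallsmile \mu)(\varepsilon^n_k) = e_1 \cdot \mu^n_k = \mu^n_k$ at every position, since each $\mu^n_k$ lies in $e_1\Lambda_q e_1$ or $e_1\Lambda_q e_2$. Together with the multiplicative compatibility already established inside the proof of Proposition~\ref{prop2}, this confirms the direct sum decomposition at the level of graded $k$-algebras, and the hypothesis $\mathrm{char}(k)\neq 2$ ensures that all odd-degree classes remain nilpotent so that nothing outside this picture survives modulo $\mathcal{N}$.
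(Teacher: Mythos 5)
Your proposal is correct and takes essentially the same route as the paper's own proof: Remark~\ref{bigrem} for the degree-zero part, Proposition~\ref{prop1} (with Remark~\ref{bigrem1}) to conclude that all positive-degree classes are nilpotent when $q\neq\pm1$ and to isolate the non-nilpotent cocycles when $q=\pm1$, and Proposition~\ref{prop2} for the identification of the positive-degree part with $k[x^2,y^2]y^2$. Your additional check that $(e_1,e_2)$ acts as the identity on positive-degree classes (Proposition~\ref{prop0} with $m=0$, using $\mu^n_k\in e_1\Lambda_q e_1$ or $e_1\Lambda_q e_2$) is sound and makes explicit a compatibility the paper leaves implicit, but it does not alter the structure of the argument.
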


\begin{proof}
If $q\neq\pm 1$, then all $\phi:\K_n\rightarrow \Lambda_q$ are nilpotent elements by Remark\ref{bigrem1} and Proposition~\ref{prop1}. From Remark~\ref{bigrem}, we have then that 
$$\HH^*(\Lambda_q)/\mathcal{N} =  \HH^0(\Lambda_q)/\mathcal{N} \cong Z(\Lambda_q)^* \cong k$$
If $q=\pm 1$, then the only non-nilpotent elements are those of Proposition~\ref{prop1}. From Remarks~\ref{bigrem} and Proposition ~\ref{prop2} we have that Hochschild cohomology ring modulo nilpotent elements of the family of quiver algebras in~\eqref{quivalg} is spanned by graded copies of cocycles given by Proposition~\ref{prop1}. That means that  
\begin{align*}
\HH^*(\Lambda_q)/\mathcal{N} &= \HH^0(\Lambda_{\pm 1})/\mathcal{N} \oplus Z^*(\Lambda,\Lambda)\\
&\cong k \oplus\Big( \bigoplus_{n>0} span_k\Big\{ \phi:\K_{2n}\rightarrow\Lambda_q \big{|}  \phi^{n}_r = \begin{cases} e_1, &\text{if $n,\;r$ is even} \\ 0 & \text{ otherwise }\end{cases}\Big\}\big)\\
&=  k \oplus k[x^2,y^2]y^2.
\end{align*}
\end{proof}

\textbf{Acknowledgment:} I want to thank Dr. Sarah Witherspoon for reading through the manuscript and making invaluable suggestions.

\newpage

\end{document}